\numberwithin{equation}{section}
\newtheorem{theorem}{Theorem}[section]
\newtheorem{lemma}[theorem]{Lemma}
\newtheorem{corollary}[theorem]{Corollary}
\newtheorem{condition}[theorem]{Condition}
\theoremstyle{definition}
\newtheorem{example}[theorem]{Example}
\newtheorem{remark}[theorem]{Remark}
\newtheorem*{ack}{Acknowledgement}
\theoremstyle{remark}
\newenvironment{romenumerate}[1][0pt]{% optional argument changes indentation
\addtolength{\leftmargini}{#1}\begin{enumerate}% gives (i), (ii) etc.
 }{\end{enumerate}}
\newcounter{oldenumi}
\newcounter{thmenumerate}
\newcounter{romxenumerate}   %less indented than standard.
\newcounter{xenumerate}   %no left indentation; thus wider lines
\newcommand{\refT}[1]{Theorem~\ref{#1}}
\newcommand{\refC}[1]{Corollary~\ref{#1}}
\newcommand{\refL}[1]{Lemma~\ref{#1}}
\newcommand{\refR}[1]{Remark~\ref{#1}}
\newcommand{\refS}[1]{Section~\ref{#1}}
\newcommand{\refE}[1]{Example~\ref{#1}}
\newcommand{\refand}[2]{\ref{#1} and~\ref{#2}}
\newcommand{\refCN}[1]{Condition~\ref{#1}}
\newcommand\refCNN{\refCN{C1}}
\newcommand\refCC[1]{\refCN{C1}\ref{#1}}
\newcommand{\refCNB}{Conditions \ref{C1} and~\ref{C2}}
\newcommand\marginal[1]{\marginpar{\raggedright\parindent=0pt\tiny #1}}
\xdef\klockan{\the\count1.0\the\count255}
\xdef\klockan{\the\count1.\the\count255}\fi
\DeclareMathOperator*{\sumx}{\sum\nolimits^{*}}
\newcommand{\sumin}{\sum_{i=1}^n}
\newcommand{\sumk}{\sum_{k=0}^\infty}
\newcommand{\sumki}{\sum_{k=1}^\infty}
\newcommand\set[1]{\ensuremath{\{#1\}}}
\newcommand\xpar[1]{(#1)}
\newcommand\bigpar[1]{\bigl(#1\bigr)}
\newcommand\Bigpar[1]{\Bigl(#1\Bigr)}
\newcommand\lrpar[1]{\left(#1\right)}
\newcommand\bigsqpar[1]{\bigl[#1\bigr]}
\newcommand\abs[1]{|#1|}
\newcommand\Bigabs[1]{\Bigl|#1\Bigr|}
\def\rompar(#1){\textup(#1\textup)}    % usage: \rompar(...)
\newcommand\xfrac[2]{#1/#2}
\newcommand\Bigparfrac[2]{\Bigpar{\frac{#1}{#2}}}
\def\xexp(#1){e^{#1}}
\newcommand\floor[1]{\lfloor#1\rfloor}
\newcommand\ntoo{\ensuremath{{n\to\infty}}}
\newcommand\epsto{\ensuremath{{\eps\to0}}}
\newcommand\bmin{\wedge}
\newcommand\downto{\searrow}
\newcommand\upto{\nearrow}
\newcommand\punkt[1]{\if.#1\else.\spacefactor1000\fi{#1}}
\newcommand\ie{i.e\punkt}
\newcommand\eg{e.g\punkt}
\newcommand\cf{cf\punkt}
\newcommand{\as}{a.s\punkt}
\newcommand\whp{{w.h.p.\spacefactor=1000}}
\newcommand\hp{\widehat p}
\newcommand\hg{\widehat g}
\newcommand\hD{\widehat D_\infty}
\newcommand{\tend}{\longrightarrow}
\newcommand\dto{\overset{\mathrm{d}}{\tend}}
\newcommand\pto{\overset{\mathrm{p}}{\tend}}
\newcommand\asto{\overset{\mathrm{a.s.}}{\tend}}
\newcommand\lito{\overset{{L^ 1}}{\tend}}
\newcommand\eqd{\overset{\mathrm{d}}{=}}
\newcommand\op{o_{\mathrm p}}
\newcommand\bbN{\mathbb N}  %{1,2,...}
\newcommand\bbZ{\mathbb Z}
\newcounter{CC}
\newcommand{\CC}{\stepcounter{CC}\CCx} %new constant C_i
\newcommand{\CCx}{c_{\arabic{CC}}}     %repeats the last C_i
\newcommand\cc[1]{\mathcal C_{#1}}
\newcommand\E{\operatorname{\mathbb E{}}}
\renewcommand\P{\operatorname{\mathbb P{}}}
\newcommand\Var{\operatorname{Var}}
\newcommand\ga{\alpha}
\newcommand\gl{\lambda}
\newcommand\eps{\varepsilon}
\newcommand\cA{\mathcal A}
\newcommand\cC{\mathcal C}
\newcommand\cE{\mathcal E}
\newcommand\ett[1]{\boldsymbol1[#1]} 
\newcommand\etta{\boldsymbol1} 
\def\[#1]{[\![#1]\!]}
\newcommand\qw{^{-1}}
\newcommand\qww{^{-2}}
\renewcommand{\=}{:=}
\newcommand\intoi{\int_0^1}
\newcommand\intoe{\int_0^\eps}
\newcommand\dtv{d_{\mathrm{TV}}}
\newcommand\ddd{\,\textup{d}}
\newcommand{\pgf}{probability generating function}
\newcommand\rhs{right-hand side}
\newcommand\gnp{\ensuremath{G(N,p)}}
\newcommand\gnm{\ensuremath{G(n,m)}}
\newcommand\gnd{\ensuremath{G(n,d)}}
\newcommand\gnx[1]{\ensuremath{G(n,#1)}}
\newcommand\gndd{\gnx{\dd}}
\newcommand\gnddx{\ensuremath{G^*(n,\dd)}}
\newcommand\gndl{\ensuremath{G(n_\gl,\dd_\gl)}}
\newcommand\dn{\ensuremath{(d_i)_1^n}}
\newcommand\din{\ensuremath{(d_i)_{i=1}^n}}
\newcommand\dd{\ensuremath{\mathbf d}}
\newcommand\ndd{(n,\dd)}
\newcommand\sus{\chi}
\newcommand\susq{\widehat\chi}
\newcommand\susoo{\chi_\infty}
\newcommand\susqoo{\widehat\chi_\infty}
\newcommand\cci{\cc{i}}
\newcommand\ccc[1]{|\cc{#1}|}
\newcommand\ccci{\ccc{i}}
\newcommand{\restr}[1]{|_{#1}}
\newcommand\sumiK{\sum_{i=1}^K}
\newcommand\sumixK{\sum_{i=2}^K}
\newcommand\bp{\ensuremath{\mathfrak X}}
\newcommand\hbp{\ensuremath{\hat{\mathfrak X}}}
\newcommand\kk{\varkappa}
\newcommand\qbp{|\bp|}
\newcommand\qhbp{|\hbp|}
\newcommand\nx[1]{\ensuremath{N_{#1}}}
\newcommand\nk{\nx{k}}
\newcommand\susbp{\sus(\bp)}
\newcommand\susqbp{\susq(\bp)}
\newcommand\rhox[1]{\ensuremath{\rho_{#1}}}
\newcommand\rhok{\rhox{k}}
\newcommand\rhooo{\rhox{\infty}}
\newcommand\muq{\widehat\mu}
\newcommand\nuq{\widehat\nu}
\newcommand\nn{^{(n)}}
\newcommand\elli{{\ell-1}}
\newcommand\ellii{{\ell-2}}
\newcommand\xn{[n]}
\newcommand\co{^\circ}
\newcommand\hG{\widehat G}
\newcommand\tG{\widetilde G}
\newcommand\tGx{\widetilde G^*}
\newcommand\hex[1]{\widehat\cE_{#1}}
\newcommand\hea{\widehat\cE_A}
\newcommand\heac{\widehat\cE_A^c}
\newcommand\cea{\cE_A}
\newcommand\ceb{\cE_B}
\newcommand\xio{\xi_0}
\newcommand\hxio{\widehat\xi_0}
\newcommand\hxi{\widehat\xi}
\newcommand\dx{\ensuremath{D^*_\infty}}
\newcommand\gx{g_*}
\newcommand\vn{\xn}
\newcommand\vna{\vn\setminus A}
\newcommand\tn{\widetilde n}
\newcommand\gdd{\delta}
\newcommand\ggl{g_\gl}
\newcommand\gglc{g_{\glc}}
\newcommand\glc{\gl_\textsf{c}}
\newcommand\muc{\mu_\textsf{c}}
\newcommand\loge{\log\eps}
\newcommand\aloge{\log(1/\eps)}
\newcommand{\sumko}{\sum_{1\le k<\infty}}
\newcommand{\sumkoo}{\sum_{1\le k\le\infty}}
\newcommand{\muoo}{\mu_\infty}
\newcommand{\nuoo}{\nu_\infty}
\newcommand{\muqoo}{\muq_\infty}
\newcommand{\nuqoo}{\nuq_\infty}
\newcommand{\mun}{\mu_n}
\newcommand{\nun}{\nu_n}
\newcommand{\hmuoo}{\widehat\mu_\infty}
\newcommand{\hnuoo}{\widehat\nu_\infty}
\newcommand{\doo}{D_\infty}
\newcommand{\Doo}{D_\infty}
\newcommand\opn{\op(n)}
\newcommand\ER{Erd\H os--R\'enyi}
\newcommand\REM[1]{{\raggedright\texttt{[#1]}\par\marginal{XXX}}}
\newcommand\citetq[2]{\citeauthor{#2} \cite[{\frenchspacing #1}]{#2}}
\newcommand\urladdrx[1]{{\urladdr{\def~{{\tiny$\sim$}}#1}}}
\begin{document}
\title%[]
{Susceptibility of random graphs with given vertex degrees}

\date{November 13, 2009} % (typeset \today{} \klockan)} %; revised ...

\author{Svante Janson}
\address{Department of Mathematics, Uppsala University, PO Box 480,
SE-751~06 Uppsala, Sweden}
\email{svante.janson@math.uu.se}
\urladdrx{http://www.math.uu.se/~svante/}

%\keywords{<keywords>}
\subjclass[2000]{05C80; 60C05} %%{Primary: <subject>; Secondary: <subject>}

\begin{abstract} 
We study the susceptibility, \ie, the mean cluster size, in random
graphs with given vertex degrees. We show, under weak assumptions,
that the susceptibility converges to the expected cluster size in the
corresponding branching process. In the supercritical case, a
corresponding result holds for the modified susceptibility ignoring
the giant component and the expected size of a finite cluster in the
branching process; this is proved using a duality theorem.

The critical behaviour is studied. Examples are given where the
critical exponents differ on the subcritical and supercritical sides.
\end{abstract}

\maketitle

\section{Introduction}\label{S:intro}

The \emph{susceptibility} $\sus(G)$ of a graph $G$ 
is defined as the mean size of the component containing a random
vertex:
\begin{equation}\label{sus1}
 \sus(G)\=|G|^{-1}\sum_{v\in V(G)}|\cC(v)|,
\end{equation}
where $\cC(v)$ denotes the component of $G$ containing the vertex $v$.
Thus, if $G$ has $n=|G|$ vertices and 
components $\cci=\cci(G)$, $i=1,\dots, K$, 
where $K=K(G)$ is the number of
components, then
\begin{equation}\label{sus}
  \sus(G)\=\sumiK \frac{\ccci}{n}\ccci
=\frac1n\sumiK \ccci^2.
\end{equation}
Although it does not matter here, we assume for later use that the components
as usual are ordered with  $\ccc1\ge\ccc2\ge\cdots$.

When the graph $G$ is itself 
random, $\sus(G)$ is thus a random variable.
(We do not take the expectation over $G$ unless we explicitly write
$\E\sus(G)$.)

The susceptibility (in particular its expectation)
has been much studied for certain models in
mathematical physics. 
(That is the reason for using the term susceptibility, and the
notation $\sus$, which both come from physics.) 
Similarly, in percolation theory, which deals
with certain random infinite graphs, the corresponding quantity is
the (mean) size of the open cluster containing a given vertex, and this
has been extensively studied; see \eg{} \citet{BRbook}.
For finite random graphs, there are some papers:
\citet{SW} studied in a pioneering paper a class of random graph processes
(including the \ER{} graph process)
and used the susceptibility to study the phase transition in them.
Some results for the \ER{} random graphs \gnp{} and \gnm{} can be
regarded as folk theorems; detailed results are given by
\citetq{Section 2.2}{Durrett} and  \citet{SJ218}.
\citet{BCHSS} give precise results for random subgraphs of
transitive graphs (including both $\gnp$ and, for example, random
subgraphs of the hypercube); further results for
random subgraphs of the hypercube are given by \citet{vdHS05,vdHS06}.
A class of inhomogeneous random graphs is studied by \citet{SJ232},
see also \citet{ChayesSmith}.
Another application is given in \citet{sjspencer}. We refer to these
papers for further background.
The purpose of the present paper is to study the susceptibility for
the random graph $\gndd$ with given vertex degrees, where $\dd=\dn$ is
a given degree sequence (see \refS{Sprel} for a detailed definition).
This case has earlier been studied in a heuristic way by
\citet{NSW01}, using the branching process in \refS{Sbranch} below.

The definition \eqref{sus} is mainly interesting in the subcritical
case, when all components are rather small. In the supercritical case,
see \citet{MR95} or \refT{Tgiant} below,
there is one giant component that is so large that it
dominates the sum in \eqref{sus};
in fact, for some $\rho>0$, $\ccc1=(\rho+\op(1))n$ while 
$\ccc2=\opn$ and thus
\begin{equation*}
  \sumiK\ccci^2
=\ccc1^2+O\Bigpar{\ccc2\sumixK\ccci}
%=\ccc1^2+O\bigpar{\ccc2 n}
=(\rho^2+\op(1))n^2
=(1+\op(1))\ccc1^2.
\end{equation*}
It then makes sense to exclude the largest component from the
definition, 
and we define as in \cite{SJ232}
the {\em modified susceptibility} $\susq(G)$
of a finite graph
$G$ by
\begin{equation}\label{susq}
  \susq(G)\=\frac1n\sumixK \ccci^2.
\end{equation}
(This is in analogy with percolation theory, where one
studies the mean size of the open cluster containing a given vertex,
given that this cluster is finite.)

Our main result is the following, 
giving the asymptotics of both $\sus$
and $\susq$ for $\gndd$,
using notation introduced in \refS{Sprel} below, 

\begin{theorem}
  \label{Tsus}
Suppose that Conditions \refand{C1}{C2} hold.
\begin{romenumerate}
\item
In the subcritical case $\nuoo<\muoo$, %  ($\E\Doo(\Doo-2)<0$),
\begin{align*}
  \sus(\gndd)&\lito\susoo\=1+\frac{\muoo^2}{\muoo-\nuoo},
\\
  \susq(\gndd)&\lito\susoo.
\end{align*}
\item
In the critical case $\nuoo=\muoo$, %  ($\E\Doo(\Doo-2)=0$),
\begin{align*}
  \sus(\gndd),  
\susq(\gndd)&\pto\infty.
\end{align*}
\item
In the supercritical case $\nuoo>\muoo$, %  ($\E\Doo(\Doo-2)>0$),
with $\kk\in(0,1)$ given by $g'(\kk)=\kk g'(1)$,
\begin{align*}
  \sus(\gndd)&\pto\infty,
\\
  \susq(\gndd)&\pto\susqoo
\=
%g(\kk)\Bigpar{1+\frac{\muqoo^2}{\muqoo-\nuqoo}}
%=
g(\kk)+\frac{\kk(g'(\kk))^ 2}{g'(\kk)-\kk g''(\kk)}
<\infty.
\end{align*}
\end{romenumerate}
\end{theorem}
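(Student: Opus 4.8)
The plan is to reduce all three statements to the local structure of $\gndd$ around a uniformly random vertex and then to control the passage from the local picture to the global quantities $\sus$ and $\susq$. The common input, established in \refS{Sprel} from Conditions \refand{C1}{C2}, is the local weak convergence of $\gndd$ to the branching process $\bp$ of \refS{Sbranch}. Writing $V$ for a uniformly random vertex, $\sus(\gndd)=\frac1n\sum_{v}|\cC(v)|=\E[|\cC(V)|\mid\gndd]$, and, grouping vertices by component size, $\sus(\gndd)=\sum_{k\ge1}k\,\nk/n$, where $\nk$ is the number of vertices in components of size $k$. First I would show $\nk/n\pto\rhok:=\P(\qbp=k)$ for each fixed $k$: the mean converges, $\E[\nk/n]=\P(|\cC(V)|=k)\to\rhok$, by local convergence, and for the variance one expands $\nk=\sum_v\ett{|\cC(v)|=k}$ and notes that for independent uniform $V,W$ the diagonal $\{\cC(V)=\cC(W)\}$ contributes only $\E[k\,\nk]=O(n)$ to $\E[\nk^2]$, while on $\{\cC(V)\ne\cC(W)\}$ one uses the standard fact that, conditionally on the bounded component $\cC(V)$, the remainder of $\gndd$ is again a configuration model whose degree sequence satisfies Conditions \refand{C1}{C2} with unchanged limits, so that $\{|\cC(W)|=k\}$ has asymptotic probability $\rhok$; hence $\Var(\nk/n)\to0$. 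The same argument gives $\frac1n\sum_v\min(|\cC(v)|,M)\pto\E[\min(\qbp,M)]$ for each fixed $M$.

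For (i), $\E\qbp=\susoo<\infty$, and it remains to pass the limit through $\sum_k k\,\nk/n$ --- that is, to prove uniform integrability of $\{|\cC(V)|\}_n$. Exploring $\cC(V)$ by pairing half-edges one at a time, the number of new vertices found at each step is stochastically dominated by $D^*_n-1$, with $D^*_n$ the size-biased degree of $\gndd$; hence $|\cC(V)|$ is dominated by the total progeny of a Galton--Watson tree whose root has the degree distribution of $\gndd$ and whose other vertices have offspring law $D^*_n-1$. Condition C2 makes $\{D^*_n\}_n$ uniformly integrable with $\E[D^*_n-1]\to\nuoo/\muoo<1$, and a standard estimate then shows the total progeny of such a subcritical tree is uniformly integrable; hence so is $\{|\cC(V)|\}_n$. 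With $\nk/n\pto\rhok$ and $\sum_k k\rhok=\susoo<\infty$ this gives $\sus(\gndd)\pto\susoo$, and since uniform integrability of $|\cC(V)|$ is inherited by $\sus(\gndd)=\E[|\cC(V)|\mid\gndd]$ the convergence is in $L^1$. For $\susq$ one has $\susq(\gndd)=\sus(\gndd)-\ccc1^2/n$ and $\ccc1^2/n\le\max\bigl(K^2/n,\ \tfrac1n\sum_{k>K}k\,\nk\bigr)\pto0$ (as $n\to\infty$, then $K\to\infty$); since $0\le\susq\le\sus$, the family $\{\susq\}$ is uniformly integrable too, so $\susq(\gndd)\to\susoo$ in $L^1$.

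For (ii) and (iii) I would argue as follows. In the critical case $\E\qbp=\infty$, so for each $M$, $\sus(\gndd)\ge\frac1n\sum_v\min(|\cC(v)|,M)\pto\E[\min(\qbp,M)]$, which increases to $\infty$ with $M$; hence $\sus(\gndd)\pto\infty$, and likewise $\susq(\gndd)\ge\frac1n\sum_v\min(|\cC(v)|,M)-M\ccc1/n\pto\infty$ since $\ccc1=\op(n)$ by \refT{Tgiant}. In the supercritical case $\ccc1=(\rhooo+\op(1))n$ by \refT{Tgiant} with $\rhooo=1-g(\kk)>0$, so $\sus(\gndd)\ge\ccc1^2/n\to\infty$; the real content is the value of $\susqoo$, obtained by duality. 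Conditionally on $\cc1$, the remaining $n'=n-\ccc1$ vertices, with their uncovered half-edges, behave like a configuration model on $(d_v)_{v\notin\cc1}$; since a half-edge misses the giant with limiting probability $\kk$, this dual degree sequence satisfies Conditions \refand{C1}{C2} with limiting degree probability generating function $x\mapsto g(\kk x)/g(\kk)$ and with $n'/n\pto g(\kk)$. Differentiating, the dual values of $\nuoo$ and $\muoo$ are $\kk^2 g''(\kk)/g(\kk)$ and $\kk g'(\kk)/g(\kk)$, and the former is strictly smaller since $g''(\kk)<g'(1)$ --- the function $g'(x)-xg'(1)$ being convex (as $g'''\ge0$) with zeros at $\kk$ and $1$ --- so the dual is strictly subcritical. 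By \refT{Tgiant} the components of $\gndd$ other than $\cc1$ are \whp{} exactly those of the dual graph, so $\susq(\gndd)=(n'/n)\,\sus(\text{dual})+\op(1)$; applying (i) to the dual and simplifying, $\susq(\gndd)\pto g(\kk)+\frac{\kk\,g'(\kk)^2}{g'(\kk)-\kk\,g''(\kk)}=\susqoo$.

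The hard part is the uniform integrability in (i) under the bare second-moment hypothesis C2, and then its transfer --- through the duality of (iii) --- to the dual graph, which first has to be shown to behave like a genuine configuration model precisely enough that the unbounded functional $\sus$ is undisturbed; both reduce to quantitative sharpenings of the exploration estimates of \refS{Sprel} and of \refT{Tgiant}.
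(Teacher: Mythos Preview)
Your overall architecture matches the paper's: establish $N_k/n\pto\rho_k$ for the lower bound (this is \refL{Lnk} and \refL{Lner}), produce an upper bound in the subcritical case, and use duality for $\susq$ in the supercritical case. The critical case and $\sus\pto\infty$ in the supercritical case are handled the same way in both.

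The substantive divergence, and the real gap, is your upper bound in (i). You want to dominate $|\cC(V)|$ stepwise by the total progeny of a Galton--Watson tree with offspring law $D^*_n-1$, but this domination is not justified and is more delicate than you suggest. In the exploration of $G^*(n,\dd)$, the partner of an active half-edge is uniform over the \emph{remaining} unmatched half-edges, so the conditional degree of the next discovered vertex is size-biased over the \emph{unexplored} vertices, not over all vertices; after a history that happens to have consumed disproportionately many low-degree vertices, this conditional law can be stochastically larger than the original $D^*_n$. A step-by-step domination by i.i.d.\ copies of $D^*_n-1$ therefore does not hold without further argument, and your uniform-integrability conclusion has nothing to stand on. You flag this as ``the hard part'' but do not resolve it. The paper sidesteps the issue entirely by a first-moment path count: since $\sus(G)\le n^{-1}\sum_{\ell\ge0}P_\ell(G)$, it suffices to bound $\E P_\ell(G^*(n,\dd))$, and a careful summation over distinct vertex sequences together with Maclaurin's inequality for elementary symmetric functions gives the exact inequality $\E P_\ell\le n\,\nu_n^{\ell-1}/\mu_n^{\ell-2}$ for every $\ell\ge1$ and every $n$ (\refL{LEP}). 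Summing the geometric series yields $\E\sus(G^*(n,\dd))\le 1+\mu_n^2/(\mu_n-\nu_n)_+$ (\refL{Lupp}), and $L^1$ convergence then follows from the matching lower bound by a squeeze. This is both simpler and stronger than a branching-process domination, since it gives a non-asymptotic bound on $\E\sus$ valid for each fixed $n$.

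For (iii) your duality sketch is in the right spirit but glosses over the key point. Conditioning on $\cc1\co=A$ forces the remainder $G(\vn\setminus A,\dd\restr{\vn\setminus A})$ to have no component $\succ A$, so the remainder is not a fresh configuration model but one conditioned on an event; your sentence ``conditionally on $\cc1$, the remaining vertices behave like a configuration model'' hides exactly this. The paper (\refT{Tdual}) bounds the total-variation distance between $\hG\ndd$ and a genuinely fresh $\tG\ndd$ by summing, over $A$, the probability that a fresh $G(\vn\setminus A,\dd\restr{\vn\setminus A})$ contains a component larger than $A$; this reduces to $\P(\ccc2\ge rn)\to0$. Once the coupling gives $\hG=\tG$ \whp, the unboundedness of $\sus$ is harmless: $\sus(\hG)=\sus(\tG)$ \whp\ outright, so the already-proved subcritical result applies to $\tG$ conditionally on its (random) degree sequence. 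Your verification that the dual is strictly subcritical via convexity of $g'(x)-xg'(1)$ is correct and equivalent to the paper's mean-value argument.
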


Note that the $L^ 1$-convergence in the subcritical case (i) entails
both $\sus(\gndd)\pto\susoo$ and $\E\sus(\gndd)\to\susoo$.
Hence, in all three cases we have
\begin{align*}
  \sus(\gndd)&\pto\susoo\=1+\frac{\muoo^2}{(\muoo-\nuoo)_+} \le\infty,
\\
  \susq(\gndd)&\pto\susqoo
\=
g(\kk)\Bigpar{1+\frac{\muqoo^2}{(\muqoo-\nuqoo)_+}}
\le\infty.
\end{align*}
Further, in (ii) and (iii) it follows trivially that
$\E\sus(\gndd)\to\infty$. Hence also $\E\sus(\gndd)\to\susoo$ holds in
all three cases. However, our proof does not (at least not
immediately) show convergence of $\E \susq(\gndd)$ in the supercritical case
(iii), although we conjecture that it holds there too.

The results are based on
approximation by a branching process $\bp$, see \refS{Sbranch},
as is standard when studying the component structure in both \gndd{}
and in several other random graph models (see \eg{} \cite{JLR} and
\cite{kernels}). 
%Our main result 
\refT{Tsus} can be seen as saying that (under some
weak conditions), the 
susceptibility $\sus$ and the modified susceptibility $\susq$ of
$\gndd$ converge to 
the corresponding mean values for the branching process
corresponding to \gndd; see \refT{Tsus2} for details.
Proofs are given in Sections \ref{Slow}--\ref{Spf}.

The proof of our result for $\susq$ is based on a duality result, 
\refT{Tdual},
saying  that if we delete the largest component $\cc1$ from a supercritical
\gndd, then the remainder is essentially another random graph of the
same type, which furthermore is subcritical. 
(The size and vertex degrees are random, but this is not
important since they are  concentrated.)
This was proved already by \citet{MR98}, but we need a slightly
sharper form here.
Such duality results for \gnp{} go back to  \citet{BB84},
see also \citet{Lucz90}, \citet{SJgiant} and the books \citet{BBbook},
\citet{JLR}; a generalization to a class of inhomogeneous random
graphs is given by
\citetq{Theorem 12.1}{kernels} and a further generalization by 
\citet{SJcutdual}.

\refT{Tsus} is stated as a limit result. An alternative is to
formulate the result as an approximation for finite $n$; this version
is given in \refS{Sapp}.
We end with some further comments. The behaviour close to criticality
is studied in \refS{Scrit} for a specific situation. We show that
there is symmetry between the 
subcritical and supercritical sides when the asymptotic degree
distribution has a third moment, but not necessarily in general; the
critical exponent on the subcritical side is always 1 but on the
supercritical side it may be arbitrarily large.
Finally, in \refS{Sex} we give some examples
showing that the main theorems may fail without our conditions.

\begin{ack}
  Parts of this research has been done during visits to
Centre de recherches math\'ematiques, Montreal (Canada),
Institut Mittag-Leffler, Djursholm (Sweden)
and Institut Henri Poincar\'e, Paris (France).
\end{ack}

\section{Preliminaries}\label{Sprel}

Let $n \in \bbN$ and let $\dd=\din$ be a sequence of
non-negative integers. 
We let \gndd{} be a random graph with degree sequence 
$(d_i)_1^n$, uniformly chosen among all possibilities (tacitly
assuming that there is any such graph at all; in particular,
$\sum_id_i$ has to be even).

As in many papers on these random graphs, we find it convenient 
to consider the corresponding
random \emph{multigraphs} 
generated by the \emph{configuration model} 
(see %introduced by 
Bollob\'as \cite{BB80} and
\cite[Section II.4]{BBbook};
see also Bender and Canfield
\cite{BenderC} and Wormald \cite{WormaldPhD,Wormald81} for related
arguments):
Let $n \in \bbN$ and let $(d_i)_1^n$ be a sequence of
non-negative integers such that  $\sumin d_i$ is even.
Then take a set of $d_i$ \emph{half-edges} for each vertex 
$i$, and combine the half-edges into pairs by a uniformly random
matching of the set of all half-edges (allowing multiple edges and loops);
this yields
the random multigraph \gnddx{} with given degree
sequence $(d_i)_1^n$.
%Note that 
%\gnddx{} does not have exactly the uniform
%distribution over all multigraphs with the given degree sequence;
%there is a weight with a factor $1/j!$ for every edge of multiplicity
%$j$, and  
%a factor $1/2$ for every loop, see \eg{} \cite[\S1]{SJgiant}. However,
Conditioned on the multigraph being a (simple) graph, we obtain 
\gndd,
the
uniformly distributed random graph with the given degree sequence.

We assume throughout the paper that we are given a sequence 
$(d_i)_1^n=(d_i\nn)_1^n$ 
for each $n\in\bbN$ (or at least for some sequence \ntoo); 
for notational simplicity we will usually not show the dependence on
$n$ explicitly for these and some other quantities.
We consider asymptotics as \ntoo, and all unspecified limits below are
as \ntoo.
We say that an event holds \whp{} (\emph{with high probability}), 
if it holds with probability tending to 1 as 
$n\to\infty$.
We use standard probabilistic notations for convergence; in particular
$\pto$ and $\dto$ for convergence in probability and in distribution,
and $\op$ in the standard way
(see \eg{} %Janson, {\L}uczak and Ruci\'nski~
\cite{JLR} or \cite{SJN6}):
for example,
if $(X_n)$ is a sequence of random variables, then
%$X_n=\Op(1)$ means ``$X_n$ is bounded in probability'' and
$X_n=\op(1)$ means that $X_n \pto0$.

We write 
\begin{align*}
n_k&=n_k(n)\=\#\set{i:d_i=k},
\quad k\ge0,    %=v_k(\gndd)$.  
\intertext{and} 
m&=m(n)\=\tfrac12\sumin d_i
=\tfrac12\sumk k n_k;
\end{align*}
thus
$n_k$ is the number of vertices of degree $k$  
and 
$m$ is the number of edges 
in the random graph \gndd{} (or \gnddx). 
We assume as in \cite{SJ204} that the given
$(d_i)_1^n$ satisfy the following regularity conditions,
\cf{} Molloy and Reed \cite{MR95,MR98} (where similar but not
identical conditions are assumed).

\begin{condition}\label{C1}
For each $n$,  $(d_i)_1^n=(d_i\nn)_1^n$ is a sequence of non-negative
integers such that $\sumin d_i$ is even.
Furthermore,
$(p_k)_{k=0}^\infty$ is a probability distribution independent of $n$
such that
\begin{romenumerate}
  \item\label{C1p}
$n_k/n=\#\set{i:d_i=k}/n\to p_k$ as \ntoo, for every $k\ge0$;
\item \label{C1l}
$\sum_k k p_k\in(0,\infty)$;
%\item\label{C1m}
%$2m/n=n\qw\sumin d_i\to\muoo$ as \ntoo;
\item \label{C1d2}
$\sum_i d_i^ 2=O(n)$;
\item \label{C1p1}
$p_1>0$.
%\item $\sigma^2 = \sum_{k \ge 1} k^2p_k \in (0,\infty )$; 
%\item $n^{-1} \sum_{i=1}^n d_i^2 \to \sigma^2$ as \ntoo. 
\end{romenumerate}
\end{condition}

Let $D_n$ be a random variable defined as 
$d_I$ for a uniform random index $I\in\set{1,\dots,n}$: thus $D_n$ is
the degree of a random
(uniformly chosen) vertex in \gndd{} or \gnddx, and 
\begin{equation}
  \P(D_n=k)=n_k/n.
\end{equation}
Define
\begin{align}
  \mu_n&\=\E D_n =\frac1n\sumin d_i=\frac{2m}n,\label{mun}\\
  \nu_n&\=\E D_n(D_n-1) =\frac1n\sumin d_i(d_i-1).\label{nun}
\end{align}
Further,
let $\Doo$ be a random variable with the distribution $\P(\Doo=k)=p_k$,
and extend \eqref{mun} and \eqref{nun} to 
$\muoo\=\E D_\infty$ and $\nuoo\=\E D_\infty(D_\infty-1)$.
%Note that $\E D_n=n\qw\sumin d_i=2m/n$.
Then \refCC{C1p} can be written
\begin{equation}
  \label{add}
D_n\dto \Doo.
\end{equation}
In other words, $\Doo$ describes the asymptotic distribution of the degree of a
random vertex in \gndd.
Furthermore, \ref{C1l} is 
$\muoo=\E \Doo\in(0,\infty)$,
\ref{C1p1} is $\P(\Doo=1)>0$,
and
\ref{C1d2} can be written
\begin{equation}
  \label{aa}
\E D_n^2=O(1)
\end{equation}
or, equivalently, $\nu_n=O(1)$.

\begin{remark}\label{RC1}
  In particular, \eqref{aa} implies that the random variables $D_n$
  are uniformly integrable, and thus \refCN{C1}\ref{C1p}, in the form
  \eqref{add},
 implies
$\E D_n\to\E \Doo$, \ie{}
 \begin{equation}
   \label{mn}
\mun=\frac{2m}{n}=\frac1n\sumin d_i\to\muoo,
 \end{equation}
see \eg{} \cite[Theorems 5.4.2 and 5.5.9]{Gut}. 
\end{remark}

We will often need an assumption that is a little stronger than
\refCC{C1d2}.

\begin{condition}
  \label{C2}
As \ntoo, $\nun\to\nuoo$.
(Equivalently, $\E D_n^2\to\E\Doo^2$.)
\end{condition}
This is clearly stronger than \refCC{C1d2}, see \eqref{aa}. Assuming
\refCNN, it is by \eqref{add} equivalent to uniform integrability of
$D_n^2$, cf \refR{RC1}.
In particular, \refCN{C2} holds if $\sup_n\E D_n^ {2+\eps}<\infty$ for
some $\eps>0$.

Let
\begin{align*}
g(x)\=\E x^{\Doo}= \sumk p_k x^k ,
\end{align*}
the probability generating function of the probability distribution
$(p_k)_{k=0}^\infty$. Thus $\muoo=g'(1)$ and $\nuoo=g''(1)$.

We shall use the result by \citet{MR95,MR98} on existence and size of
a giant component in \gndd; we state it in a version from \cite{SJ204}.
For a graph $G$, let 
%$v(G)$ and $e(G)$ denote the numbers of  vertices and edges in $G$,
%respectively; further, let 
$v_k(G)$ be the number of vertices of
degree $k$, $k\ge0$.

\begin{theorem}[\citeauthor{MR95}]
  \label{Tgiant}
Suppose that \refCN{C1} holds.
Consider the random  graph \gndd\ and 
let $\cC_1$ and $\cC_2$ be its largest and second largest components.
  \begin{romenumerate}
\item\label{Tgianta}
If\/ $\nuoo-\muoo=\E \Doo(\Doo-2)>0$, then there is a unique $\kk\in(0,1)$
such that $g'(\kk)=\muoo\kk$. With this $\kk$, 
as \ntoo,
\begin{align*}
  \ccc1/n&\pto 1-g(\kk)>0,
%\\
\intertext{and}
  v_k(\cC_1)/n&\pto p_k(1-\kk^k), \text{ for every } k\ge0,
 \end{align*}
while $ \ccc2/n\pto 0$. % and $e(\cC_2)/n\pto 0$.

\item\label{Tgiantb}
If\/ $\nuoo-\muoo=\E \Doo(\Doo-2)\le0$, then 
$ \ccc1/n\pto 0$.
  \end{romenumerate}

The same results hold for \gnddx.
\end{theorem}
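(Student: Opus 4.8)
The plan is to prove everything directly for the configuration multigraph $\gnddx$ and then transfer it to $\gndd$: since \refCC{C1d2} gives $\sum_i d_i(d_i-1)=O(n)$, a classical estimate shows that $\P(\gnddx\text{ is simple})$ stays bounded away from $0$, and conditioning on an event whose probability is bounded below preserves any convergence of the form $X_n/n\pto c$; hence it suffices to treat $\gnddx$. For $\gnddx$ I would run the standard exploration of the random pairing (as in \cite{SJ204}, the source of the version of \refT{Tgiant} stated above): reveal the matching one pair at a time, keeping the half-edges partitioned into \emph{sleeping}, \emph{active}, and \emph{dead}. While an active half-edge exists, take one, pair it with a uniformly chosen not-yet-dead half-edge, kill both, and, if the partner was sleeping, wake its vertex so that its remaining half-edges become active; when no active half-edge remains, wake a fresh sleeping vertex and start a new component. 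It is convenient to run this in continuous time, giving each active half-edge an independent rate-$1$ clock, so that the total pairing rate at time $t$ equals the number $A(t)$ of active half-edges and the relevant counting processes become tractable semimartingales.

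The core of the argument is a law-of-large-numbers analysis of this exploration. Write $S_k(t)$ for the number of sleeping \emph{vertices} of degree $k$. Using \refCN{C1} — with \refCC{C1d2} controlling the increments, which are not uniformly bounded and are handled by truncating at a slowly growing degree — one shows that $S_k(t)/n$ and $A(t)/n$ stay uniformly close to explicit deterministic trajectories. The increment of $A$ when an active half-edge is paired is $k-2$ with probability $\approx \frac{k S_k(t)}{2m}$ (hitting a sleeping degree-$k$ vertex) and $-2$ otherwise, so at the start of a component the drift of $A$ equals $(\nuoo-\muoo)/\muoo=\E\Doo(\Doo-2)/\muoo$. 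In the sub/critical case $\nuoo\le\muoo$ the active count is thus a random walk with non-positive drift started at $O(1)$ (the root degree), every component is fully explored before $A/n$ can lift off to a positive level, and a standard excursion/concentration argument gives $\ccc1/n\pto0$. In the supercritical case $\nuoo-\muoo>0$ the active count first grows, and the set of explored vertices keeps expanding until, after the brief initial excursions through the small components, $A$ first returns to $0$; the law-of-large-numbers analysis identifies that stopping time as the point at which $S_k(t)/n$ has dropped to $p_k\kk^k$, with $\kk$ characterised by $g'(\kk)=\muoo\kk$. One checks that $\kk\in(0,1)$ is the unique such value: $h(x)\=g'(x)-g'(1)x$ is convex on $[0,1]$ (since $g'''\ge0$), vanishes at $1$ where $h'(1)=\nuoo-\muoo>0$ (so $h<0$ just below $1$), and has $h(0)=p_1>0$ by \refCC{C1p1}, so by convexity it has exactly one zero in $(0,1)$. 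At that stopping time the explored vertices of degree $k$ number $\approx np_k(1-\kk^k)$ and the total is $\approx n(1-g(\kk))$; these make up $\cc1$ and give its degree profile.

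Finally, $\ccc2/n\pto0$ in the supercritical case: once $\cc1$ has been explored the process resumes from an empty active set in a configuration whose degree counts have been depleted precisely to $\approx np_k\kk^k$, and the relation $g'(\kk)=\muoo\kk$ makes this residual configuration-type model subcritical — the duality the paper records as \refT{Tdual}, going back to \citet{MR98} — so the sub/critical argument applies to it and rules out a second linear-sized component. Since all of this is carried out on $\gnddx$, it also establishes the final clause of \refT{Tgiant} (``the same results hold for $\gnddx$''), and the first paragraph then transfers it to $\gndd$. I expect the main obstacle to be the concentration step in the supercritical regime — showing $A(t)/n$ and $S_k(t)/n$ follow their deterministic trajectories for long enough, and in particular that $A/n$ does not touch $0$ prematurely (which is exactly what would allow a spurious second giant) — with the increments controlled only through $\sum_i d_i^2=O(n)$ rather than by a uniform bound; the bound $\ccc1/n\pto0$ in the sub/critical case and the reduction from $\gndd$ to $\gnddx$ are comparatively routine.
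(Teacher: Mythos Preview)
The paper does not prove \refT{Tgiant} at all: it is stated as a known result, attributed to \citet{MR95,MR98} in the version from \cite{SJ204}, and is simply used as input for the rest of the argument. So there is nothing to compare against here.

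That said, your sketch is a reasonable outline of the approach in \cite{SJ204}, which is precisely the reference the paper cites for this formulation. The continuous-time exploration with sleeping/active/dead half-edges, the fluid-limit analysis of $S_k(t)/n$ and $A(t)/n$, the identification of the first return of $A$ to zero with the equation $g'(\kk)=\muoo\kk$, and the transfer from $\gnddx$ to $\gndd$ via the $\liminf\P(\text{simple})>0$ estimate are all the ingredients used there. Your convexity argument for uniqueness of $\kk$ is correct. The one point you flag as the main obstacle --- concentration of $A(t)/n$ with only a second-moment bound on the degrees, and in particular ruling out a premature return to zero --- is indeed where the work lies, and is handled in \cite{SJ204} via a martingale/Doob-inequality argument together with the truncation you mention; you would need to fill that in to have a genuine proof rather than a plan.
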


In the usual, somewhat informal, language,
the theorem shows that \gndd{} has a giant component if and only if 
$\nuoo-\muoo=\E \Doo(\Doo-2)>0$. 
We say that $\gndd$ is 
\emph{subcritical} if $\nuoo<\muoo$ ($\E\Doo(\Doo-2)<0$),
\emph{critical} if $\nuoo=\muoo$ ($\E\Doo(\Doo-2)=0$),
\emph{supercritical} if $\nuoo>\muoo$ ($\E\Doo(\Doo-2)>0$).

\begin{remark}
  \label{Rsimple}
  \refCN{C1}\ref{C1l},\ref{C1d2} and \eqref{mn} imply that
\begin{equation*}
%  \label{simple}
\liminf_{\ntoo} \P\bigpar{\gnddx\text{ is a simple graph}}>0;
\end{equation*}
see for instance 
%Bender and Canfield \cite{BenderC},
%Bollob\'as \cite{BB80} and
%\citet[Section II.4]{BBbook}, 
%McKay \cite{McKay},
%McKay and Wormald \cite{McKayWo},
\cite{BenderC},
\cite{BB80},
\cite[Section II.4]{BBbook}, 
\cite{McKay}
and
\cite{McKayWo}
under some extra conditions on $\max d_i$,
and
\cite{SJ195} for the general case.
Since we obtain \gndd{} by conditioning \gnddx{} on being a simple
graph, the results in the present paper for \gndd{}
follow from the results for \gnddx{} by this conditioning.
(We only sometimes state the results for both \gndd\ and \gnddx\ explicitly.)
\end{remark}

\begin{remark}\label{Rp1=0}
  \refCC{C1p1} excludes the case $p_1=0$, when there are some
  pathologies, in particular in the critical case 
(which for $p_1=0$ occurs when $p_0+p_2=1$, \ie, $\doo\in\set{0,2}$ a.s.)
We give some counterexamples for this case in \refS{Sex}, see also
  \cite[Remark 2.7]{SJ204}.

The supercritical case with $p_1=0$ (which occurs as soon as $\P(\doo\ge3)>0$)
is better behaved.
In this case, \refT{Tgiant} holds with $\kk=0$ and thus $\ccc1=n-o(n)$,
see   \cite[Remark 2.7]{SJ204};
hence $\sus(\gndd)=n-o(n)$. We conjecture that
$\susq(\gndd)\pto0$ always in this case, but we have not verified it.
One important example is the random $d$-regular graph \gnd, when all
$d_i=d$ for some fixed $d\ge3$. %, this is supercritical.
In fact \cite{BB81,Wormald81b}, for $d\ge3$, \whp\ \gnd\ is connected and thus
trivially $\sus(\gnd)=n\to\infty$ and $\susq(\gnd)=0$, in accordance
with \refT{Tsus} (with $\kk=0$).
\end{remark}

\section{Branching processes}\label{Sbranch}

For standard material on branching processes, see \eg\ \cite{AN}.
We review some basic facts that are important for us.
The branching processes that we will use are Galton--Watson processes
where the initial individual has a special offspring distribution.
They are in general defined as follows.

Let $\xio$ and $\xi$ be two given nonnegative integer-valued
random variables (only their
distributions matter). Start the branching process $\bp$ with one
individual in generation 0, and give it a random number $\xio$ of children.
In the sequel, give each individual a number of children that is
distributed as $\xi$, with all these numbers independent.

We let $\qbp$ denote the
total population size of $\bp$, and define  
\begin{equation*}
  \rhok=\rhok(\bp)\=\P(|\bp|=k), \qquad 1\le k\le\infty.
\end{equation*}
In particular, $\rhooo$ is the survival probability of \bp, \ie, the
probability that $\bp$ lives for ever. 

Let $G_0(x)\=\E x^{\xio}$ and $G(x)\=\E x^{\xi}$ be the \pgf{s} of
$\xio$ and $\xi$.
We define 
$\kk$ as the smallest non-negative solution to
\begin{equation}\label{kkk}
G(\kk)=\kk.   
\end{equation}
For a standard Galton--Watson process ($\xio=\xi$), 
it is well-known that this is the extinction probability. %1-\rhooo$.
In general,
by conditioning on $\xio$,
\begin{equation}\label{k37}
  1-\rhooo = \P(\qbp<\infty)=\E\kk^{\xio}=G_0(\kk).
\end{equation}

The susceptibility and modified susceptibility are defined by
\begin{align} \label{susbp0}
 \sus(\bp)
&\=\E\bigpar{|\bp|}
=\sumkoo k\rhok,
\\
  \susqbp
&\=\E\bigpar{|\bp|; {|\bp|<\infty}}
=\sumko k\rhok. \label{susqbp0}
\end{align}
(Note that these are expectations and not random variables.)
Thus, $\susbp=\susqbp$ when the survival probability
$\rhooo=0$ (the subcritical or critical case), 
and
$\susbp=\infty\ge\susqbp$ when $\rhooo>0$ (the supercritical case).

For our random graph $\gndd$ with a given degree sequence satifying
\refCNN, we define the corresponding branching process as the
Galton--Watson branching process  with initial offspring
distribution $\xio\=\Doo$, and general offspring distribution
$\xi=\dx$, where $\dx$ is the shifted size-biased version (or
transform) of $\doo$ defined by
\begin{equation}
  \label{dx}
\P(\dx=k)=\frac{(k+1)\P(\doo=k+1)}{\E\doo},
\qquad k\ge0.
\end{equation}
(We assume \refCNN, so $0<\E \Doo<\infty$ and then \eqref{dx} defines a
probability distribution.) The reason for this definition is the
well-known fact that \dx\ appears as the natural limit distribution
when exploring components locally; the novice can see this in the
proof of \refL{Lnk} below.

Note that
\begin{equation}
  \label{edx}
  \begin{split}
\E\dx
&=
\sumk k\P(\dx=k)
=\sumk \frac{k(k+1)\P(\doo=k+1)}{\E\doo}
\\&
=\frac{\E \doo(\doo-1)}{\E\doo}
=\frac{\nuoo}{\muoo}.
	  \end{split}
\end{equation}
Hence, the standard classification of $\bp$ as subcritical, critical or
supercritical depending on whether the expected number of children
satisfies $\E\xi<1$, $\E\xi=1$ or $\E\xi>1$, becomes the conditions
$\nuoo<\muoo$, $\nuoo=\muoo$ and $\nuoo>\muoo$ we already have seen
for \gndd, and there is a perfect agreement between these types
for \bp\ and for \gndd.
(This indicates that it really is $\nuoo/\muoo$ rather than
$\nuoo-\muoo$ that is the natural parameter for criticality testing
for \gndd; this is well-known, see \eg\ \cite{SJ199} for generalizations.)

Furthermore, if $\gx(x)\=\E x^{\dx}$ is the probability generating
function of $\dx$, then
\begin{equation}
  \label{gx}
G(x)=\gx(x)
=\sumk \frac{(k+1)\P(\doo=k+1)x^k}{\E\doo}
=\frac{g'(x)}{\E\doo}.
\end{equation}
(Note that $\E\doo=g'(1)$, so $\gx(1)=1$ as it should.)
In particular,
\eqref{gx} shows that \eqref{kkk} can be written
\begin{equation}
  {g'(\kk)}={\muoo}\kk.\label{kk}
\end{equation}
Thus, in the supercritical case $\kk$ here is the same as in
\refT{Tgiant}. (In the subcritical and critical cases, $\kk=1$,
which always satifies \eqref{kk}.)
Further, by \eqref{k37}, the asymptotic relative size $1-g(\kk)$ of $\cc1$ in
\refT{Tgiant}(i) equals $\rhooo$ for the corresponding branching
process \bp. (Recall that $G=g$.)

We can easily compute the susceptibility of a Galton--Watson process
by standard calculations.
We consider the general version with $\xio$ and $\xi$, before specializing to
the branching process corresponding to \gndd.
This calculation has been done by \citet{NSW01}, see also
\citetq{Section 2.3}{Durrett}.
(Similar results for a more complicated branching process with types,
but without a special initial offspring distribution, are given in
\cite{SJ232}.) 
\begin{theorem}\label{Tbp}
  For a branching process \bp\ defined as above by $\xio$ and $\xi$, 
  \begin{equation}
	\label{susbp}
\sus(\bp)\=\E(\qbp)
=1+\frac{\E\xio}{(1-\E\xi)_+}.
  \end{equation}
Further, if \/$\xio$ and $\xi$ have the \pgf{s} $G_0$ and $G$, 
and $\kk$ is the smallest nonnegative root of $G(\kk)=\kk$, then
  \begin{equation}
	\label{susqbp}
\susq(\bp)\=\E(\qbp;\qbp<\infty)
=G_0(\kk)+\frac{\kk G_0'(\kk)}{1-G'(\kk)}.
  \end{equation}
Hence, assuming $\E\xio>0$, we have
$\chi(\bp)=\infty$ if and only if \/$\E\xi\ge1$, while
$\susq(\bp)<\infty$ whenever $\E\xi\neq1$.
\end{theorem}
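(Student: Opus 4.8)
The plan is to compute $\sus(\bp)=\E|\bp|$ directly by a first-step (branching) decomposition, and then to handle $\susq(\bp)$ by conditioning on extinction and reducing to the subcritical case.

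First I would set up the standard self-consistency identity for the total progeny of a pure Galton--Watson process with offspring distribution $\xi$. Let $\bp'$ be such a process started from a single individual with offspring law $\xi$, put $T\=|\bp'|$, and let $m\=\E\xi$. Conditioning on the root's number of children $\xi$, the subtrees rooted at the children are i.i.d.\ copies of $\bp'$, so $\E T = 1 + m\,\E T$; when $m<1$ this gives $\E T = 1/(1-m)$, and when $m\ge 1$ one checks (e.g.\ by a truncation/monotone convergence argument on $\E(T\wedge N)$, or by noting $\P(T=\infty)>0$ when $m>1$ and the critical recurrence argument when $m=1$) that $\E T=\infty$. For the full process $\bp$, condition on $\xio$: $|\bp| = 1 + \sum_{j=1}^{\xio} T_j$ with the $T_j$ i.i.d.\ copies of $T$ independent of $\xio$, hence $\E|\bp| = 1 + \E\xio\cdot\E T = 1 + \E\xio/(1-\E\xi)$ when $\E\xi<1$, and $\E|\bp|=\infty$ when $\E\xi\ge1$ and $\E\xio>0$. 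This is exactly \eqref{susbp}.

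For \eqref{susqbp}, the idea is to pass to the process conditioned on extinction. With $\kk$ the smallest nonnegative root of $G(\kk)=\kk$, the event $\{|\bp'|<\infty\}$ has probability $\kk$ for a process started from one $\xi$-individual, and it is classical that conditioning a Galton--Watson process on extinction yields another Galton--Watson process whose offspring \pgf\ is $\tilde G(x)\=G(\kk x)/\kk$; note $\E\tilde\xi = \tilde G'(1)=G'(\kk)<1$ (the conditioned process is subcritical), and $\tilde G(1)=1$. Similarly, on the event $\{|\bp|<\infty\}$ (probability $G_0(\kk)$ by \eqref{k37}) the root's offspring has the tilted \pgf\ $\tilde G_0(x)\=G_0(\kk x)/G_0(\kk)$. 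Then $\susq(\bp)=\E(|\bp|;|\bp|<\infty) = G_0(\kk)\,\E^{\mathrm{cond}}|\bp|$, and applying the already-proved formula \eqref{susbp} to the conditioned (subcritical) process gives
\begin{equation*}
\susq(\bp) = G_0(\kk)\Bigpar{1 + \frac{\tilde G_0'(1)}{1-\tilde G'(1)}}
 = G_0(\kk)\Bigpar{1 + \frac{\kk G_0'(\kk)/G_0(\kk)}{1-G'(\kk)}}
 = G_0(\kk) + \frac{\kk G_0'(\kk)}{1-G'(\kk)},
\end{equation*}
which is \eqref{susqbp}; finiteness holds whenever $G'(\kk)\ne 1$, i.e.\ $\E\xi\ne1$ (in the subcritical case $\kk=1$ and $G'(1)=\E\xi<1$; in the supercritical case $\kk<1$ and convexity of $G$ forces $G'(\kk)<1$). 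The last sentence of the theorem then follows by combining the two displays.

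The main obstacle is the justification of the conditioned-process identity $\susq(\bp)=G_0(\kk)\,\E^{\mathrm{cond}}|\bp|$ together with the description of the conditioned offspring laws, and—more delicately—making the $\E\xi\ge1$ branch of \eqref{susbp} rigorous at criticality, where $T<\infty$ a.s.\ but $\E T=\infty$; the cleanest route is to prove the identity $\E T = 1/(1-m)$ first for $\E(T\wedge N)$ via the recursion and then let $N\to\infty$ by monotone convergence, which simultaneously covers $m<1$ (finite limit) and $m\ge1$ (the recursion forces the truncated means to diverge). Everything else is bookkeeping with \pgf's, using $G=g/\E\doo$ only at the very end if one wants the $\gndd$-specific constants, which are not needed for \refT{Tbp} itself.
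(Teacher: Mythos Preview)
Your proposal is correct and, for \eqref{susqbp}, follows exactly the paper's route: condition on extinction, identify the tilted offspring laws with \pgf{s} $G_0(\kk x)/G_0(\kk)$ and $G(\kk x)/\kk$, note the conditioned process is subcritical since $G'(\kk)<1$ by convexity, and apply the subcritical formula. For \eqref{susbp} there is a cosmetic difference: the paper sums generation sizes $\E|\bp_k|=\E\xio\,(\E\xi)^{k-1}$ as a geometric series, whereas you use the progeny recursion $\E T=1+m\,\E T$ and then condition on $\xio$. Both are one-line computations; neither buys anything over the other.

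One simplification worth noting: the truncation you flag as the ``main obstacle'' is unnecessary. Since $T=1+\sum_{j\le\xi}T_j$ with all terms nonnegative, Tonelli (or Wald's identity for nonnegative summands) gives $\E T=1+m\,\E T$ as an identity in $[0,\infty]$ with no integrability hypothesis; this immediately forces $\E T=\infty$ whenever $m\ge1$, covering the critical case without any limiting argument. The paper also explicitly checks the degenerate boundary cases $\P(\xi=1)=1$ (where $\kk=0$ and \eqref{susqbp} reads $G_0(0)$) and $G_0(\kk)=0$ (where $|\bp|=\infty$ a.s.); you should do the same, since in those cases the conditioned process is not well-defined as written.
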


\begin{proof}
As said above, this is proved by \citet{NSW01} (in slightly different
  notation), but for completeness  we give a proof.
  Let $\bp_k$ be the $k$th generation of \bp. Then
  $\E|\bp_k|=\E\xio(\E\xi)^{k-1}$ for $k\ge1$, and thus when $\E\xi\le1$
  (so $\kk=1$ and $\rhooo=0$)
  \begin{equation*}
	\sus(\bp)=1+\sumki\E\xio(\E\xi)^{k-1}
=1+\frac{\E\xio}{1-\E\xi},
  \end{equation*}
while $\sus(\bp)=\infty$ when $\E\xi>1$ and thus $\rhooo>0$.
This shows \eqref{susbp}.

For $\susq$ we use the standard and easily verified fact that
$\hbp\=(\bp\mid\qbp<\infty)$, \ie{} \bp\ conditioned on extinction, is
another branching process with initial offspring distribution $\hxio$
and general offspring distribution $\hxi$ given by
\begin{align*}
  \P(\hxi=k) &= \frac{\kk^k\P(\xi=k)}{G(\kk)} = \kk^{k-1}\P(\xi=k),
\\
  \P(\hxio=k)& = \frac{\kk^k\P(\xio=k)}{G_0(\kk)};
\end{align*}
these have expectations
\begin{align}
  \E\hxi&=G'(\kk),\label{pu1}
\\
\E\hxio&=
\frac{\kk G'_0(\kk)}{G_0(\kk)}. \label{pu0}
\end{align}

If $\E\xi\le1$, then $\kk=1$ and $\hxi\eqd\xi$, $\hxio\eqd\xio$; thus
\eqref{susqbp} reduces to \eqref{susbp}. (Except in the trivial case
$\P(\xi=1)=1$, when $\qbp=1$ or $\infty$ a.s.; then
$\kk=0$ and we interpret \eqref{susqbp} as
$\sus(\hbp)=\P(\qbp<\infty)=G_0(0)$, which is immediate.)

Note that, by the convexity of $G$, if $\E\xi=G'(1)>1$, so $\kk<1$,
then $\E\hxi=G'(\kk)<1$, and $\hbp$ is subcritical.
Thus \eqref{k37}, \eqref{susbp} and \eqref{pu1}--\eqref{pu0} yield
\begin{equation*}
  \begin{split}
	\susq(\bp)&
=\P(\qbp<\infty)\E(\qbp\mid\qbp<\infty)
=\P(\qbp<\infty)\E\qhbp
\\&
=\P(\qbp<\infty)\sus(\hbp)
%\\&
=G_0(\kk)\Bigpar{1+\frac{\E\hxio}{1-\E\hxi}}
=G_0(\kk)+\frac{\kk G'_0(\kk)}{1-G'(\kk)}.
  \end{split}
\end{equation*}
(The  case $G_0(\kk)=0$, which occurs when
$\P(\xio=0)=\P(\xi=0)=0$, and entails $\kk=0$ and $\qbp=\infty$ a.s.,
is trivial and easily verified separately.)
\end{proof}

We now specialize to the branching process corresponding to \gndd.
\begin{corollary}\label{Cbp}
  Given $\dd=\dn$ such that \refCNN{} holds,  let
  \bp\ be the corresponding branching process. Then
  \begin{align}
	\label{b5a}
\sus(\bp)
&=1+\frac{\muoo}{(1-\nuoo/\muoo)_+}
=1+\frac{\muoo^2}{(\muoo-\nuoo)_+}
\\
\susq(\bp)
&=g(\kk)+\frac{\kk g'(\kk)}{1-g''(\kk)/\muoo}
=g(\kk)+\frac{\kk g'(\kk)g'(1)}{g'(1)-g''(\kk)}
\label{b5b}\\&
=g(\kk)+\frac{\kk g'(\kk)^2}{g'(\kk)-\kk g''(\kk)}
,	\label{b5c}
  \end{align}
with $\susq(\bp)<\infty$ unless $\nuoo=\muoo$.
\end{corollary}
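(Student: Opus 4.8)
The plan is to obtain Corollary~\ref{Cbp} as a direct specialization of \refT{Tbp}. The branching process $\bp$ attached to \gndd\ has initial offspring distribution $\xio=\doo$ and general offspring distribution $\xi=\dx$, so its \pgf{s} are $G_0=g$ and, by \eqref{gx}, $G=\gx=g'/\muoo$; consequently $G'(x)=g''(x)/\muoo$. By \refCC{C1l} we have $\E\xio=g'(1)=\muoo\in(0,\infty)$, and by \eqref{edx} we have $\E\xi=\nuoo/\muoo$. Since $\E\xio>0$, \refT{Tbp} applies, and its final sentence gives that $\susq(\bp)<\infty$ whenever $\E\xi\neq1$, that is, unless $\nuoo=\muoo$; this is the last clause of the corollary.

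First I would prove \eqref{b5a}: substituting into \eqref{susbp} gives $\sus(\bp)=1+\E\xio/(1-\E\xi)_+=1+\muoo/(1-\nuoo/\muoo)_+$, and multiplying the numerator and denominator of the fraction by $\muoo>0$ rewrites this as $1+\muoo^2/(\muoo-\nuoo)_+$. Next, for the first expression in \eqref{b5b} I would substitute $G_0=g$, $G_0'=g'$, $G'=g''/\muoo$ into \eqref{susqbp}, obtaining $\susq(\bp)=g(\kk)+\kk g'(\kk)/\bigpar{1-g''(\kk)/\muoo}$. Here $\kk$ is the smallest nonnegative root of $G(\kk)=\kk$; by \eqref{gx} this equation is precisely $g'(\kk)=\muoo\kk$, i.e.\ \eqref{kk}, so $\kk$ coincides with the quantity in \refT{Tgiant} in the supercritical case, and $\kk=1$ in the subcritical and critical cases.

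To pass to the remaining two forms I would use $\muoo=g'(1)$ together with the relation $g'(1)=g'(\kk)/\kk$, which is \eqref{kk} rearranged and is legitimate because $\kk>0$: indeed $G(0)=\gx(0)=g'(0)/\muoo=p_1/\muoo>0$ by \refCC{C1p1}, so the smallest root $\kk$ of $G(\kk)=\kk$ is strictly positive (and of course $\kk\le1$). Multiplying numerator and denominator of $\kk g'(\kk)/\bigpar{1-g''(\kk)/\muoo}$ by $g'(1)$ yields $\kk g'(\kk)g'(1)/\bigpar{g'(1)-g''(\kk)}$, which is the second expression of \eqref{b5b}; then substituting $g'(1)=g'(\kk)/\kk$ and clearing the common factor $\kk$ from numerator and denominator produces $\kk g'(\kk)^2/\bigpar{g'(\kk)-\kk g''(\kk)}$, which is \eqref{b5c}.

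There is no genuine obstacle here: the corollary is purely a matter of bookkeeping. The only care needed is with $\kk$ --- verifying $\kk\in(0,1]$ so that the divisions by $\kk$ and by $\muoo-g''(\kk)$ (equivalently by $g'(\kk)-\kk g''(\kk)=\kk\bigpar{\muoo-g''(\kk)}$, which is positive exactly when $G'(\kk)<1$, as recorded in the proof of \refT{Tbp}) are meaningful, and noting via \eqref{kk} that this $\kk$ is the same as the one appearing in \refT{Tgiant}.
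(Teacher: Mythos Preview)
Your proof is correct and follows essentially the same route as the paper: specialize \refT{Tbp} using $G_0=g$, $G=g'/\muoo$, $\E\xio=\muoo$, $\E\xi=\nuoo/\muoo$, and then manipulate via \eqref{kk} and $\muoo=g'(1)$. You even supply a small detail the paper leaves implicit, namely that $\kk>0$ because $G(0)=p_1/\muoo>0$ by \refCC{C1p1}, so the division by $\kk$ in passing to \eqref{b5c} is legitimate.
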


\begin{proof}
  First, 
$\E\xio=\E\Doo=\muoo$ and, by \eqref{edx}, $\E\xi=\E\dx=\nuoo/\muoo$;
  thus \eqref{susbp} yields
  \eqref{b5a}.

Next, $G_0(x)=g(x)$ and, by \eqref{gx}, $G(x)=g'(x)/\muoo$.
Hence, \eqref{susqbp} yields, using \eqref{kk},
  \begin{align*}
\susq(\bp)
&=g(\kk)+\frac{\kk g'(\kk)}{1-g''(\kk)/\muoo}
=g(\kk)+\frac{\kk g'(\kk)^2}{g'(\kk)-\kk g''(\kk)},
%\qedhere
  \end{align*}
and the result follows, recalling also $\muoo=g'(1)$.
\end{proof}

The values $\sus(\bp)$ and $\susq(\bp)$ are thus the quantities
called $\susoo$ and $\susqoo$ in 
\refT{Tsus}, and \refT{Tsus} may thus be reformulated as follows.
\begin{theorem}
  \label{Tsus2}
Suppose that Conditions \ref{C1} and \ref{C2} hold, and let \bp\ be
the corresponding branching process.
Then
$\sus(\gndd)\pto\sus(\bp)$
and
$\susq(\gndd)\pto\susq(\bp)$.
In the subcritical case $\nuoo<\muoo$, further 
$\sus(\gndd)\lito\sus(\bp)$
and
$\susq(\gndd)\lito\susq(\bp)$.
\end{theorem}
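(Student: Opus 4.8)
The plan is to prove the statement for the configuration multigraph $\gnddx$ and deduce it for $\gndd$ by conditioning on simplicity: by \refR{Rsimple} the probability of simplicity stays bounded away from $0$, so both $\pto$ and $L^1$ convergence pass to the conditioned law. By \refC{Cbp} we have $\susbp=\susoo$ and $\susqbp=\susqoo$, so it suffices to establish the three cases of \refT{Tsus} for $\gnddx$. The central object is the breadth-first exploration of the component $\cC(V)$ of a uniformly random vertex $V$: by the standard local-weak-convergence argument for these graphs (cf.\ the proof of \refL{Lnk}), the explored neighbourhood converges to the branching process $\bp$, so that $|\cC(V)|\dto\qbp$; also $\sus(\gnddx)=\E\bigpar{|\cC(V)|\mid\gnddx}$ and $\susq(\gnddx)=\sus(\gnddx)-\ccc1^2/n$.

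\emph{Lower bound.} For fixed $K$ put $\sus_{\le K}(G)\=\tfrac1n\sum_{i:\ccci\le K}\ccci^2=\E\bigpar{|\cC(V)|\,\ett{|\cC(V)|\le K}\mid G}$, bounded by $K$. Local convergence gives $\E\sus_{\le K}(\gnddx)\to a_K\=\E\bigpar{\qbp;\qbp\le K}$, and a routine Azuma-type bound along the edges of the random matching (each re-pairing changes $\sus_{\le K}$ by $O(K^2/n)$) upgrades this to $\sus_{\le K}(\gnddx)\pto a_K$. Since $a_K\uparrow\susqbp$ and $\sus\ge\susq\ge\sus_{\le K}$, this alone yields: $\sus(\gnddx),\susq(\gnddx)\pto\infty$ in the critical case (where $\susqbp=\infty$); the bounds $\liminf\sus(\gnddx)\ge\susoo$ and $\liminf\susq(\gnddx)\ge\susqoo$ in probability in general; and, in the supercritical case, $\sus(\gnddx)\ge\ccc1^2/n\pto\infty$ by \refT{Tgiant}(i).

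\emph{Subcritical upper bound — the main step.} Now $\susoo<\infty$ and it remains to bound $\E\sus(\gnddx)=\E|\cC(V)|$ from above. The exploration of $\cC(V)$ is dominated, in vertex count, by a Galton--Watson process $T_n$ with initial offspring $D_n$ and offspring the shifted size-biased transform of $D_n$: revealing the partner of an active half-edge reaches a vertex chosen size-biased \emph{among those still available}, and one couples this with size-biased sampling with replacement from all $n$ vertices, deleting the spurious descendants created whenever an already-discovered vertex is re-used, so that $\cC(V)$ is no larger than $T_n$. Since the offspring mean equals $\nun/\mun\to\nuoo/\muoo<1$ by \refCN{C2}, we get $\E T_n=1+\mun/(1-\nun/\mun)\to\susoo$ — using only first-moment control, so no third moment is needed — and also $T_n\dto\qbp$. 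Hence $\limsup\E\sus(\gnddx)\le\susoo$, which with the lower bound gives $\E\sus(\gnddx)\to\susoo$ and $\E\sus_{>K}(\gnddx)\to\susoo-a_K\to0$; combining this with $\sus_{\le K}\pto a_K$ and Markov applied to the small-mean tail $\sus_{>K}$ gives $\sus(\gnddx)\pto\susoo$, whence $\sus(\gnddx)\lito\susoo$ by Scheff\'e's lemma. Finally $\ccc1^2/n\le\max(K^2/n,\sus_{>K}(\gnddx))\pto0$, so $\susq(\gnddx)=\sus(\gnddx)-\ccc1^2/n\lito\susoo$ as well. This domination step is the main obstacle: the configuration-model exploration is only approximately a branching process (shrinking half-edge pool, vertex re-use, and the danger of a rare large component inflating the mean), and it must be controlled by soft first-moment arguments, since the absence of a third moment rules out an $L^2$ treatment of $|\cC(V)|$.

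\emph{Supercritical case, $\susq$.} Here $\sus(\gnddx)\pto\infty$ was shown above; for $\susq$ we invoke the duality \refT{Tdual}: deleting $\cc1$ from the supercritical $\gnddx$ leaves, after conditioning on its (concentrated) random order and degree sequence, a random graph of the same type on $n'=(g(\kk)+\op(1))n$ vertices whose asymptotic degree distribution has probability generating function $g(\kk x)/g(\kk)$; this dual graph is subcritical and satisfies \refCNB, the positivity $p_1>0$ of \refCC{C1p1} being exactly what makes its degree distribution charge $1$. Applying the subcritical case (already proved) to it, with $\mu'=\kk g'(\kk)/g(\kk)$ and $\nu'=\kk^2 g''(\kk)/g(\kk)$, gives its $\sus\pto 1+(\mu')^2/(\mu'-\nu')$; since $\susq(\gnddx)=\tfrac{n'}{n}\sus(\gnddx\setminus\cc1)$, we obtain $\susq(\gnddx)\pto g(\kk)\bigpar{1+(\mu')^2/(\mu'-\nu')}$, and a short computation inserting $\mu',\nu'$ identifies this with $g(\kk)+\kk g'(\kk)^2/(g'(\kk)-\kk g''(\kk))=\susqoo$.
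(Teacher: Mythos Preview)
Your overall architecture matches the paper's: lower bound from local convergence to $\bp$ (the paper packages this as \refL{Lnk} and \refL{Lner}, using a second-moment computation rather than Azuma, but your martingale argument is fine), the critical and supercritical $\sus$ cases follow immediately, and the supercritical $\susq$ is handled exactly as in the paper by the duality \refT{Tdual} followed by the subcritical result applied to the dual graph.

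The substantive difference, and the place where your argument has a real gap, is the subcritical \emph{upper} bound. You claim an almost-sure coupling under which $|\cC(V)|\le T_n$, where $T_n$ is the Galton--Watson tree with initial offspring $D_n$ and general offspring the shifted size-biased $D_n$. The coupling you describe (``sample with replacement, delete spurious descendants when an already-discovered vertex is re-used'') does not work as stated: in the actual exploration the partner of an active half-edge is uniform among the \emph{remaining} half-edges, and when the with-replacement sample lands on an already-used half-edge, the real process does not simply prune that branch --- it must hit some genuinely new half-edge, possibly at a vertex of higher degree than the one your branching process drew. So the exploration can locally outpace the branching tree, and there is no obvious monotone coupling; the without-replacement law conditioned on the history is not stochastically below the with-replacement law. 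What you actually need is only the expectation bound $\E\sus(\gnddx)\le 1+\mu_n^2/(\mu_n-\nu_n)$, and this is exactly what the paper proves as \refL{Lupp}, but by a completely different route: it uses the elementary inequality $\sus(G)\le n^{-1}\sum_{\ell\ge0}P_\ell(G)$ (each vertex of $\cC(V)$ is reached by at least one path from $V$) and then bounds $\E P_\ell$ directly by an explicit computation in the configuration model (\refL{LEP}), the key step being a Maclaurin-type inequality on $\sum^*\prod d_{i_j}(d_{i_j}-1)$. This yields $\E P_\ell/n\le \nu_n^{\ell-1}/\mu_n^{\ell-2}$, and summing the geometric series gives the bound. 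Once you have that, your way of assembling the pieces ($\sus_{\le K}\pto a_K$, $\E\sus_{>K}\to\susoo-a_K$, Markov, Scheff\'e) is perfectly good; the paper instead just quotes a lemma from \cite{SJ232} that packages the same idea.
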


\section{A lower bound}\label{Slow}
We continue to assume \refCNN, and let $\bp$ be the branching process
corresponding to \gndd\ as in \refS{Sbranch}.
Let $\nk(G)$ denote the number of vertices in components of order
$k$ in a graph $G$. Thus the number of such components is
$\nk(G)/k$.
We can write the definition \eqref{sus} as
\begin{equation}\label{susnk}
  \sus(G)=\frac1{|G|}\sumki \frac{\nk(G)}k k^2=\sumki k\frac{\nk(G)}{|G|}.
\end{equation}

\begin{lemma}
  \label{Lnk}
Suppose that \refCNN\ holds. Then,
for every fixed $k\ge0$, 
\begin{equation}
  \label{nklim}
\nk(\gndd)/n\pto\rhok(\bp)\=\P(\qbp=k).
\end{equation}
The same holds for \gnddx.
\end{lemma}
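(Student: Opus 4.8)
The plan is to show that, for each fixed $k$, the expected fraction of vertices in components of order $k$ converges to $\rhok(\bp)$, and then to upgrade this to convergence in probability via a second-moment (variance) estimate. Equivalently, writing $N_k(G)/k$ for the number of order-$k$ components, it suffices to show that $\E[\nk(\gnddx)]/n\to\rhok$ and $\Var(\nk(\gnddx)/n)\to0$; the statement for $\gndd$ then follows by the conditioning argument in \refR{Rsimple}, since $\P(\gnddx\simple)$ is bounded away from $0$ and convergence in probability is preserved under conditioning on an event of asymptotically positive probability.

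For the first-moment computation I would fix a vertex $v$ (say $v=1$) and explore its component in $\gnddx$ by the standard breadth-first half-edge exploration: reveal the half-edges at $v$, pair each with a uniformly chosen unused half-edge, thereby discovering neighbours, and continue. The key point is \emph{local coupling}: as long as the number of half-edges used so far is $o(n)$ (which is automatic if we only ask whether $|\cC(v)|=k$ for fixed $k$), the number of new half-edges attached to each newly discovered vertex converges in distribution to $\dx$ for the first vertex $v$ it is $D_n\dto\doo$, and for each subsequent vertex it is the shifted size-biased variable appearing in \eqref{dx} and there is enough independence (the probability of ever pairing two half-edges within the explored set is $O(1/n)$) that the probability that this exploration yields a tree of exactly $k$ vertices converges to $\P(|\bp|=k)=\rhok$. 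Here \refCNB\ enter: \refCC{C1p} gives the convergence of the offspring laws, and \refCC{C1d2}/\refCN{C2} give the uniform integrability needed to pass the convergence in distribution through to the relevant probabilities (and in particular to rule out ``escape to large degree''). Since $\P(|\cC(1)|=k)=\E[\nk(\gnddx)]/n$ by exchangeability of the vertices, this yields $\E[\nk(\gnddx)]/n\to\rhok$.

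For the variance I would compute $\E[\nk(\gnddx)^2]/n^2$ by picking two independent uniform vertices $v,w$ and estimating $\P(|\cC(v)|=k,\ |\cC(w)|=k)$. Conditioning on $v$ being in a component of order $k$ (an event of probability $O(1/n)$ per vertex, so the components are ``small''), the component of $w$ is with probability $1-O(1/n)$ disjoint from that of $v$, and exploring it in the graph with the $O(1)$ half-edges of $\cC(v)$ removed is, up to an error of $o(1)$ in the relevant probabilities, the same exploration as before (removing finitely many half-edges does not disturb the limiting offspring law). Hence $\P(|\cC(v)|=k,|\cC(w)|=k)=(\rhok+o(1))^2$, so $\E[(\nk/n)^2]\to\rhok^2=(\E[\nk/n])^2$ and the variance vanishes; together with the first-moment limit this gives \eqref{nklim} for $\gnddx$, and then for $\gndd$ as noted above.

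The main obstacle is making the local coupling argument rigorous and uniform: one must quantify the error in approximating the exploration process by the Galton--Watson process $\bp$, controlling both the (small) probability of creating a cycle among the $O(1)$ explored half-edges and the deviation of the empirical offspring distribution from $\dx$ after a bounded number of steps — this is exactly where \refCC{C1d2} (equivalently $\E D_n^2=O(1)$) is used to prevent a vertex of anomalously large degree from appearing, and \refCC{C1p} together with \refCN{C2} to get genuine distributional convergence $D_n\dto\doo$ with matching first two moments. Everything else (exchangeability, the conditioning on simplicity, the passage from first and second moments to convergence in probability) is routine.
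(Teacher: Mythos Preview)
Your plan is correct and matches the paper's approach: first moment via breadth-first exploration of the component of a random vertex (the paper phrases this as an explicit sum over rooted ordered trees of order $k$, after reducing to tree components since the expected number of short cycles is $O(1)$), then a second-moment computation with two independent random vertices to kill the variance, and finally transfer from $\gnddx$ to $\gndd$ via \refR{Rsimple}. One small correction: the lemma assumes only \refCNN, not \refCN{C2}, and indeed only $D_n\dto\doo$ together with $\mun\to\muoo$ (the latter following from \refCC{C1d2} via uniform integrability of $D_n$, see \refR{RC1}) are needed --- convergence of second moments plays no role here.
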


\begin{proof}
  This is well-known (and see \eg\ \cite{SJ199} for a more general
  situation), but for completeness we sketch the proof. 
By \refR{Rsimple}, it suffices to consider \gnddx.

The expectation $\E \nk(\gnddx)/n$ is the probability that a random
vertex $v_0$ belongs to a component with exactly $k$ vertices. 
We let $\nk'$ be the number of vertices in tree components of order
$k$, and note 
that it is easy to see that the expected number of cycles of length
$\le k$ is $O(1)$, and thus $\E|\nk(\gnddx)-\nk'|=O(1)$; hence it
suffices to consider $\nk'$.

Let $\cC$ be the component containing the random vertex $v_0$.
We explore $\cC$  by breadth-first search, using a
predetermined order of the half-edges at each vertex. In this way, $\cC$
is exhibited as an ordered (or plane) tree, possibly with
some extra edges, and with root $v_0$. Let $T$ be a given tree with
$k$ vertices, and let us compute the probability that $\cC$ equals
$T$ (as an ordered, rooted, unlabelled tree). If $T$ has a root of
degree $d_0$ and $k-1$ other vertices of outdegrees $d_1,\dots,d_k$
(in breadth-first order), then 
there are $n_{d_0}$ choices of $v_0$ and, for $i=1,\dots,k$,
$n_{d_i}-O(1)$ choices of the $i$th vertex. Moreover, for $i\ge1$ we
also have $d_i+1$ choices of half-edge to connect to, out of $2m-O(1)$
remaining half-edges. The probability
is thus, using \eqref{mn} and \eqref{dx},
\begin{equation*}
  \frac{n_{d_0}}n\prod_{i=1}^{k-1}\frac{(d_i+1)n_{d_i+1}}{2m}+o(1)
=  \P(\Doo=d_0)\prod_{i=1}^{k-1}\P(\dx=d_i)+o(1), 
\end{equation*}
which equals, except for $o(1)$, the probability that the family tree
of $\bp$ (considered as an ordered tree) 
equals $T$.
Summing over all trees $T$ of order $k$, we find 
\begin{equation}\label{enk}
  \begin{split}
\E \nk'/n
&=
\P(\cC\text{ is a tree of order $k$)}=\P(\qbp=k)+o(1)=\rhok(\bp)+o(1)
\\&
\to\rhok(\bp).  	
  \end{split}
\raisetag{\baselineskip}
\end{equation}
The same argument, but starting with two independent random vertices,
shows that $\E \nk'(\nk'-k)/n^2\to\rhok(\bp)^2$. Hence,
$\Var(\nk'/n)\to0$, and thus, by \eqref{enk},
$\nk'/n\pto\rhok(\bp)$, which as said above completes the proof.
\end{proof}

\begin{lemma}
  \label{Lner}
Suppose that \refCNN\ holds, and let $a$ be a real number.
\begin{romenumerate}
  \item
If $a<\sus(\bp)$, then $\sus(\gnddx)>a$ \whp
  \item
If further $\nuoo\le\muoo$, then also $\susq(\gnddx)>a$ \whp
\end{romenumerate}
\end{lemma}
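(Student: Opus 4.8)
The plan is to derive both parts from the convergence $\nk(\gnddx)/n \pto \rhok(\bp)$ established in \refL{Lnk}, using the series representation \eqref{susnk} together with Fatou's lemma (in its probabilistic form). First I would treat part (i). Fix $a<\sus(\bp)=\sumki k\rhok(\bp)$. Since this is a sum of nonnegative terms, there is a finite $K$ such that $\sum_{k=1}^K k\rhok(\bp)>a$. By \eqref{susnk}, $\sus(\gnddx)\ge \sum_{k=1}^K k\,\nk(\gnddx)/n$, because we are simply discarding the nonnegative tail. By \refL{Lnk}, for each fixed $k\le K$ we have $\nk(\gnddx)/n\pto\rhok(\bp)$, and since $K$ is fixed this gives $\sum_{k=1}^K k\,\nk(\gnddx)/n \pto \sum_{k=1}^K k\rhok(\bp)>a$. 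Hence $\P\bigpar{\sum_{k=1}^K k\,\nk(\gnddx)/n > a}\to1$, and a fortiori $\P\bigpar{\sus(\gnddx)>a}\to1$, which is exactly the claim that $\sus(\gnddx)>a$ \whp.

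For part (ii), assume in addition $\nuoo\le\muoo$, i.e.\ the branching process is subcritical or critical, so that $\rhooo=\P(\qbp=\infty)=0$ and therefore $\susq(\bp)=\sus(\bp)=\sumko k\rhok(\bp)$. The difference now is that $\susq(\gnddx)=\frac1n\sumixK\ccci^2$ omits the \emph{largest} component, so I cannot simply write $\susq(\gnddx)\ge\sum_{k=1}^K k\,\nk(\gnddx)/n$: the largest component might itself have order $\le K$, in which case one of the $\nk$ counts should be reduced. The fix is to note that removing the single largest component changes $\sum_{k=1}^K k\,\nk(\gnddx)$ by at most $\max(\ccc1, K)\le$ (something small compared to $n$). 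More carefully: if $\ccc1\le K$ then $\susq(\gnddx)\ge \sum_{k=1}^K k\,\nk(\gnddx)/n - \ccc1^2/n \ge \sum_{k=1}^K k\,\nk(\gnddx)/n - K^2/n$, while if $\ccc1>K$ then the largest component contributes nothing to $\sum_{k=1}^K k\,\nk$ and $\susq(\gnddx)\ge\sum_{k=1}^K k\,\nk(\gnddx)/n$ outright. In either case $\susq(\gnddx)\ge \sum_{k=1}^K k\,\nk(\gnddx)/n - K^2/n$, and since $K^2/n\to0$ the same argument as in part (i) gives $\susq(\gnddx)>a$ \whp.

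The only genuinely delicate point is the bookkeeping in part (ii) around which component is removed; everything else is a routine truncation of a convergent nonnegative series combined with \refL{Lnk}. (One should also remark that $\sus(\bp)$ or $\susq(\bp)$ may be $+\infty$, in which case $K$ can be chosen to make the partial sum exceed any prescribed finite $a$, so the statement is vacuous-free and the argument is unchanged.) I expect no serious obstacle here: this lemma is the ``easy direction'' lower bound, and the hard work—establishing a matching upper bound—will come later in the paper via the duality theorem \refT{Tdual} and second-moment estimates.
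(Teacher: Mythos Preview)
Your argument for part (ii) is essentially identical to the paper's, including the case split on whether $\ccc1\le K$. Part (i) is also the same truncation idea in the subcritical and critical cases. However, there is a genuine gap in part (i) in the supercritical case $\nuoo>\muoo$.

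You write $\sus(\bp)=\sumki k\rhok(\bp)$ and then choose $K$ so that the partial sum $\sum_{k=1}^K k\rhok(\bp)$ exceeds $a$. But by \eqref{susbp0} the correct identity is $\sus(\bp)=\sumkoo k\rhok$, the term $k=\infty$ included; the \emph{finite} sum $\sumko k\rhok$ equals $\susq(\bp)$, not $\sus(\bp)$. In the supercritical case $\rhooo>0$, so $\sus(\bp)=\infty$ while $\susq(\bp)<\infty$ by \refT{Tbp}. Thus for any $a$ with $\susq(\bp)<a<\infty=\sus(\bp)$ there is \emph{no} finite $K$ with $\sum_{k=1}^K k\rhok>a$, and the truncation argument breaks down. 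Your parenthetical remark that ``the partial sum can be made to exceed any prescribed finite $a$'' is precisely the step that fails: the partial sums of $k\rhok$ over finite $k$ are bounded by $\susq(\bp)$.

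The paper handles this by treating the supercritical case separately and directly: by \refT{Tgiant}, $\ccc1/n\pto\rhooo>0$, so $\sus(\gnddx)\ge\ccc1^2/n=n(\rhooo^2+\op(1))\to\infty$, which exceeds any fixed $a$ \whp. Once you add this one-line case, your proof is complete and matches the paper's.
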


\begin{proof}
  In the supercritical case $\nuoo>\muoo$, 
$\sus(\gnddx)\ge\frac1n\ccc1^2=n(\rhooo^2+\op(1)) >a$ \whp\ by \refT{Tgiant}.

Assume thus $\nuoo\le\muoo$, and thus $\rhooo=0$.
By the assumption, \eqref{susbp0} and $\rhooo=0$, then
$a<\sus(\bp)
=\sumko k\rhok$.
Hence there exists $k_0$ such that $\sum_{k=1}^{k_0}k\rho_k >a$.
By \eqref{susnk} and \refL{Lnk}, 
\begin{equation*}
  \begin{split}
\sus(\gnddx)
\=\frac1n\sumki k N_k	
\ge \sum_{k=1}^{k_0}\frac{kN_k}n
\pto
\sum_{k=1}^{k_0}k\rho_k >a,
  \end{split}
\end{equation*}
and thus $\sus(\gnddx)>a$ \whp

Finally, considering the two cases $\ccc1>k_0$ and $\ccc1\le k_0$
separately, we see that
\begin{equation*}
  \begin{split}
\susq(\gnddx)
\ge\frac1n\sum_{k=1}^{k_0}k N_k	-\frac{k_0^2}n
\pto
\sum_{k=1}^{k_0}k\rho_k >a,
  \end{split}
\end{equation*}
and thus also $\susq(\gnddx)>a$ \whp
\end{proof}

\section{An upper bound}\label{Supper}

A path of length $\ell\ge0$ in a multigraph is a sequence
$i_0e_1i_1\dotsm e_\ell i_\ell$ of alternating vertices and
edges that are distinct and such that each $e_j$ has endpoints
$i_{j-1}$ and $i_j$. 

\begin{lemma}\label{LEP}
  Let $P_\ell$ be the number of paths of length $\ell$ in
  $\gnddx$. Then, for every $\ell\ge1$,
  \begin{equation}\label{ep4}
\E P_\ell 
\le \frac{(n\nun)^{\ell-1}}{(n\mun)^{\ell-2}}	
= n\frac{\nun^{\ell-1}}{\mun^{\ell-2}}	.
  \end{equation}
\end{lemma}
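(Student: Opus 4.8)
The plan is to bound $\E P_\ell$ directly by summing over ordered sequences of half-edges in the configuration model $\gnddx$. A path of length $\ell$ is determined by a choice of $\ell+1$ distinct vertices $i_0,i_1,\dots,i_\ell$ together with, for each internal step $j=1,\dots,\ell$, a matched pair of half-edges (one at $i_{j-1}$, one at $i_j$). I would first set up the expectation as $\E P_\ell = \sum \P(\text{the $\ell$ prescribed half-edge pairs are all matched to each other})$, where the outer sum ranges over all ways to pick distinct vertices $i_0,\dots,i_\ell$ and, at each vertex, the specific half-edge(s) used. The key probabilistic input is the standard fact that in a uniform random matching of $2m = n\mun$ half-edges, the probability that $\ell$ prescribed disjoint pairs are all present is $\prod_{t=0}^{\ell-1}\frac{1}{2m-2t-1} \le (2m)^{-\ell} = (n\mun)^{-\ell}$ for $\ell\ge1$ (using $2m-2t-1 \ge 2m - 2\ell \cdot$\dots — actually one only needs each factor $\le (2m)^{-1}$ after noting $2m-2t-1 \ge 1$; more carefully $\prod (2m-2t-1)^{-1}\le (2m-2\ell+1)^{-\ell}$, but the crude bound $\le (2m)^{-\ell}$ suffices if we are willing to lose nothing since the numerator will overcount anyway — I will use the clean bound $\prod_{t<\ell}(2m-2t-1)^{-1}\le (2m-2\ell)^{-\ell}$ only if needed, but expect the looser $\le (n\mun)^{-\ell}$ bound combined with a generous count to already give the stated inequality).

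Next I would count the (weighted) number of half-edge configurations. Summing over the choice of $i_0$ and its used half-edge contributes at most $\sum_i d_i = n\mun$. For each internal vertex $i_j$ with $1\le j\le \ell-1$, we must choose an incoming half-edge and an outgoing half-edge, giving at most $d_{i_j}(d_{i_j}-1)$ ordered choices; summing over $i_j$ this is at most $\sum_i d_i(d_i-1) = n\nun$. For the final vertex $i_\ell$ we choose only the incoming half-edge, contributing at most $\sum_i d_i = n\mun$ again — but wait, that would give an extra factor; the resolution is that the half-edge at $i_j$ that connects to $i_{j-1}$ is already "used up" by the pair, so the clean bookkeeping is: assign to the pair $\{i_{j-1},i_j\}$ a weight equal to (number of half-edges at $i_{j-1}$ available) times (number at $i_j$ available). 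The cleanest route is to order the sum as: pick the half-edge at $i_0$ ($\le n\mun$ choices via $\sum d_i$), then for $j=1,\dots,\ell-1$ pick vertex $i_j$ and its second half-edge ($\le n\nun$ via $\sum d_i(d_i-1)$, since one half-edge is consumed by the pair coming in and we need one more going out), and finally pick $i_\ell$ and its single half-edge, which is consumed by the incoming pair ($\le n\mun$). This over-counts, giving $\E P_\ell \le (n\mun)(n\nun)^{\ell-1}(n\mun)\cdot(n\mun)^{-\ell} = n^2\mun^2\nun^{\ell-1}/\mun^\ell \cdot n^{\ell-1}\cdot n^{-\ell}$ — I must recount to land exactly on $n\nun^{\ell-1}\mun^{2-\ell}$. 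The correct combinatorics: there are $\ell$ edges hence $\ell$ matched pairs, probability $\le (n\mun)^{-\ell}$; the number of ways to choose the ordered vertex-and-half-edge data is $\le \bigl(\sum_i d_i\bigr)\prod_{j=1}^{\ell-1}\bigl(\sum_i d_i(d_i-1)\bigr) = (n\mun)(n\nun)^{\ell-1}$, because after fixing $i_0$ and one of its half-edges, each subsequent $i_j$ ($j<\ell$) needs a half-edge to absorb the incoming pair \emph{and} a fresh half-edge to send out, i.e.\ a choice weighted by $d_{i_j}(d_{i_j}-1)$, while $i_\ell$ absorbs the last incoming pair using a half-edge that is \emph{not} chosen freely — it is whichever half-edge the random matching happened to select, so it contributes no counting factor. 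Thus $\E P_\ell \le (n\mun)(n\nun)^{\ell-1}(n\mun)^{-\ell} = (n\nun)^{\ell-1}(n\mun)^{1-\ell}\cdot(n\mun)^{1} \cdot (n\mun)^{-1}$ — hmm, $(n\mun)\cdot(n\mun)^{-\ell} = (n\mun)^{1-\ell}$, so $\E P_\ell \le (n\nun)^{\ell-1}(n\mun)^{1-\ell} = n\frac{\nun^{\ell-1}}{\mun^{\ell-2}}\cdot\frac{1}{n^{0}}$: indeed $(n\nun)^{\ell-1}/(n\mun)^{\ell-1} = \nun^{\ell-1}/\mun^{\ell-1}$, times nothing — that gives $\nun^{\ell-1}/\mun^{\ell-1}$, off by a factor $n\mun$. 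So the incoming-half-edge at $i_\ell$ \emph{does} carry a factor; the honest count must include $\sum_i d_i$ for $i_\ell$, i.e.\ the data is $(n\mun)\cdot(n\nun)^{\ell-1}\cdot(n\mun)$ divided by a symmetry/overcount of $(n\mun)$ coming from the fact that choosing the half-edge at $i_\ell$ and the probability of matching are not independent — precisely, $\P(\text{pairs present}) \le \prod (2m-2t-1)^{-1}$ but the last pair's probability, given the endpoint half-edge is named, is $(2m - (2\ell-1))^{-1}$, so naming $\ell+1$ vertices and $2\ell$ half-edges (two per internal vertex, one per endpoint — total $2 + 2(\ell-1) = 2\ell$, correct) gives count $(n\mun)(n\nun)^{\ell-1}$ \emph{wait} that's only $\ell+1$ half-edges worth of labeling if endpoints get one each: $i_0$ gets 1, $i_1,\dots,i_{\ell-1}$ get 2 each, $i_\ell$ gets 1, total $2+2(\ell-1)=2\ell$, matching the $2\ell$ half-edges in $\ell$ pairs. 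The count is then $\bigl(\sum d_i\bigr)^{?}$... I will in the write-up organize it as: number of paths $\le$ number of (vertex, half-edge-sequence) data $= \sum_{i_0,\dots,i_\ell \text{ distinct}} d_{i_0}\,d_{i_1}(d_{i_1}-1)\cdots d_{i_{\ell-1}}(d_{i_{\ell-1}}-1)\, d_{i_\ell}$, times $\P(\ell \text{ specific pairs matched}) \le (n\mun)^{-\ell}$, and then bound $\sum_{\text{distinct}} \le \prod (\text{individual sums}) = (n\mun)\cdot(n\nun)^{\ell-1}\cdot(n\mun)$, yielding $\E P_\ell \le (n\mun)^2(n\nun)^{\ell-1}(n\mun)^{-\ell} = (n\nun)^{\ell-1}(n\mun)^{2-\ell} = n\,\nun^{\ell-1}/\mun^{\ell-2}$, which is exactly \eqref{ep4}.

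So concretely the proof runs: (1) identify a path with the data of $\ell+1$ distinct vertices and, at each vertex, the list of half-edges it uses along the path (one at each end-vertex, two at each internal vertex); (2) observe that for any fixed such data, the probability that the configuration model's random matching realizes exactly those $\ell$ pairs is $\prod_{t=0}^{\ell-1}(2m-2t-1)^{-1} \le (2m)^{-\ell} = (n\mun)^{-\ell}$; (3) by linearity of expectation, $\E P_\ell \le (n\mun)^{-\ell}\sum_{i_0,\dots,i_\ell}^{*} d_{i_0} d_{i_\ell}\prod_{j=1}^{\ell-1} d_{i_j}(d_{i_j}-1)$, where $\sum^*$ is over distinct indices; (4) drop the distinctness constraint to upper-bound the sum by the product of the individual sums, $\bigl(\sum_i d_i\bigr)^2\bigl(\sum_i d_i(d_i-1)\bigr)^{\ell-1} = (n\mun)^2(n\nun)^{\ell-1}$; (5) combine to get $\E P_\ell \le (n\mun)^{2-\ell}(n\nun)^{\ell-1} = n\nun^{\ell-1}/\mun^{\ell-2}$.

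The main obstacle is purely the bookkeeping in step (3)–(4): getting the half-edge accounting exactly right so that each internal vertex contributes the factor $d_i(d_i-1)$ (an ordered choice of an incoming and an outgoing half-edge, necessarily distinct because path edges are distinct) while each of the two endpoints contributes $d_i$, and simultaneously matching this against exactly $\ell$ independent-ish matching events each of probability $\le (2m)^{-1}$. There is no deep idea here — it is the standard first-moment computation for the configuration model — but one must be careful not to double-count half-edges or mis-state which half-edge at the terminal vertex is "chosen" versus "forced by the matching." Once the data/event correspondence is pinned down, steps (2), (4), (5) are one line each.
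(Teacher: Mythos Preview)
Your step (2) contains a sign error that breaks the argument. You claim
\[
\prod_{t=0}^{\ell-1}\frac{1}{2m-2t-1}\le (2m)^{-\ell},
\]
but since $2m-2t-1<2m$ for every $t\ge0$, each factor satisfies $(2m-2t-1)^{-1}>(2m)^{-1}$, so the product is \emph{strictly larger} than $(2m)^{-\ell}$. Dropping distinctness in step (4) only enlarges the numerator further; both moves push in the same (wrong) direction, so nothing cancels. Concretely, your final bound would require $\prod_{j=1}^{\ell}(2m-2j+1)\ge(2m)^{\ell}$, which is false.

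The paper repairs this by \emph{not} immediately dropping the distinctness constraint. Starting from the exact formula
\[
\E P_\ell=\frac{\sum^{*}_{i_0,\dots,i_\ell} d_{i_0}\prod_{j=1}^{\ell-1}d_{i_j}(d_{i_j}-1)\,d_{i_\ell}}{\prod_{j=1}^{\ell}(2m-2j+1)},
\]
it sums over $i_\ell$ first while the other indices are still fixed and distinct: because those $\ell$ vertices already consume total degree at least $1+2(\ell-1)=2\ell-1$, one gets $\sum_{i_\ell}d_{i_\ell}\le 2m-(2\ell-1)=2m-2\ell+1$, which exactly cancels the smallest factor in the denominator. The same trick on $i_0$ cancels the next factor $2m-2\ell+3$. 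This leaves the sum $\sum^{*}_{i_1,\dots,i_{\ell-1}}\prod d_{i_j}(d_{i_j}-1)$ over a denominator $\prod_{j=1}^{\ell-2}(2m-2j+1)$; the paper then invokes Maclaurin's inequality (comparing elementary symmetric means) together with the observation that the number $r$ of vertices with $d_i\ge2$ satisfies $r\le m$, to finish. Your approach is salvageable only if you incorporate something like this two-step cancellation before relaxing the constraints.
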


\begin{proof}
If $\ell=1$, then \eqref{ep4} says that $\E P_1\le n\mun=\sum_i d_i =2m$;
this is trivially true: $P_1\le 2m$ because a path of length 1 is a
single edge and each edge that is not a
loop yields two paths in opposite direction.

Let $\ell\ge2$ and $\ell\le m$.
A path $i_0\dotsm i_\ell$ contains one half-edge at $i_0$ and at
$i_\ell$, and two half-edges at each of $i_1,\dots,i_\elli$; these may
be chosen arbitrarily, and for each choice, the probability that they
are connected to each other in the right way is
$(2m-1)\qw\dotsm(2m-2\ell+1)\qw$.
Hence,
\begin{equation*}
  \E P_\ell =
\frac{\sum^*_{i_0,\dots, i_\ell} d_{i_0} \cdot
\prod_{j=1}^{\ell-1} d_{i_j}(d_{i_j}-1) \cdot d_{i_\ell}}
{\prod_{j=1}^\ell(2m-2j+1)},
\end{equation*}
where $\sum^*$ denotes the sum over \emph{distinct} indices.
%$i_0,\dots,i_\ell$. 

For each choice of distinct $i_0,\dots,i_{\ell-1}$ with $d_{i_0}\ge1$
and $d_{i_j}\ge2$, $1\le j\le \ell-1$, the sum $\sum d_{i_\ell}$ over
$i_\ell\notin\set{i_0,\dots,i_\elli}$ equals 
\begin{equation*}
  \sumin d_i-d_{i_0} -   \sum_{j=1}^{\ell-1}d_{i_j}
\le 2m-1-2(\ell-1)
=2m-2\ell+1.
\end{equation*}
Hence,
\begin{equation*}
  \E P_\ell 
\le
\frac{\sum^*_{i_0,\dots, i_{\ell-1}} d_{i_0}
\prod_{j=1}^{\ell-1} d_{i_j}(d_{i_j}-1) }
{\prod_{j=1}^{\ell-1}(2m-2j+1)}.
\end{equation*}

Similarly, for each choice of distinct $i_1,\dots,i_{\ell-1}$ with 
$d_{i_j}\ge2$, $1\le j\le \ell-1$, the sum $\sum d_{i_0}$ over
$i_0\notin\set{i_1,\dots,i_{\ell-1}}$ equals 
\begin{equation*}
  \sumin d_i- \sum_{j=1}^{\ell-1}d_{i_j}
\le 2m-2(\ell-1)
<2m-2\ell+3.
\end{equation*}
Hence,
\begin{equation}\label{ep3}
  \E P_\ell 
\le
\frac{\sum^*_{i_1,\dots, i_{\ell-1}} 
\prod_{j=1}^{\ell-1} d_{i_j}(d_{i_j}-1) }
{\prod_{j=1}^{\ell-2}(2m-2j+1)}
\le
2^{-(\ell-2)}
\frac{\sum^*_{i_1,\dots, i_{\ell-1}} 
\prod_{j=1}^{\ell-1} d_{i_j}(d_{i_j}-1) }
{\prod_{j=1}^{\ell-2}(m-j)}
\end{equation}

Let $a_i\=d_i(d_i-1)$ and let $R$ be the set of indices $i$ such that
$a_i>0$, \ie, $d_i\ge2$. Let $r\=|R|$.
By an inequality of Maclaurin \cite[Theorem 52]{HLP},
for $2\le\ell\le r+1$,
\begin{equation*}
  \lrpar{\frac{(r-\ell+1)!}{r!}\sumx_{i_1,\dots,i_{\ell-1}\in R}
\prod_{j=1}^\elli a_{i_j} }^{1/(\ell-1)}
\le\frac1r\sum_{i\in R} a_i
\le\frac1r\sumin a_i
=\frac{n\nun}{r}.
\end{equation*}
Hence,
\begin{equation*}
  \begin{split}
\sumx_{i_1,\dots,i_{\ell-1}}\prod_{j=1}^\elli d_{i_j}(d_{i_j}-1)
&=
\sumx_{i_1,\dots,i_{\ell-1}\in R}\prod_{j=1}^\elli a_{i_j}
\le\Bigparfrac{n\nun}{r}^\elli \frac{r!}{(r-\ell+1)!}%\prod_{j=0}^\ellii(r-j)
\\&
=\xpar{n\nun}^\elli\prod_{j=0}^\ellii\Bigpar{1-\frac jr}.	
  \end{split}
\end{equation*}
Further, $2m=\sumin d_i \ge 2r$, so $r\le m$.
Consequently, \eqref{ep3} yields
\begin{equation*}%\label{ep3}
  \E P_\ell 
\le
(2m)^{-(\ell-2)}
\frac
{\prod_{j=1}^{\ell-2}(1-j/r)}
{\prod_{j=1}^{\ell-2}(1-j/m)}
\cdot(n\nun)^\elli
\le
\frac{(n\nun)^\elli}{(2m)^\ellii}
=
\frac{(n\nun)^\elli}{(n\mun)^\ellii},
\end{equation*}
which proves the result when $2\le \ell\le m$ and $\ell\le r+1$.
Since trivially $P_\ell=0$ if $\ell>m$ or $\elli>r$, this completes
the proof.
\end{proof}

\begin{lemma}
  \label{Lupp}
For any $\dd$,
  \begin{equation}\label{lupp}
	\E\bigpar{\sus(\gnddx)}\le 1+\frac{\mun^ 2}{(\mun-\nun)_+}.
  \end{equation}
\end{lemma}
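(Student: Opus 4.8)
\emph{Proof plan.} The plan is to bound $\sus(\gnddx)$ deterministically by a sum over paths and then take expectations via \refL{LEP}.

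First I would rewrite the susceptibility in terms of connected pairs of vertices. Since $\sum_{v}\abs{\cC(v)}=\sum_{u,v}\ett{u,v\text{ lie in the same component}}$, and the diagonal terms $u=v$ contribute exactly $n$,
\begin{equation*}
  \sus(\gnddx)=1+\frac1n\sum_{u\neq v}\ett{u,v\text{ lie in the same component}}.
\end{equation*}
Next comes the key combinatorial observation: if $u\neq v$ lie in the same component of a multigraph, then there is at least one path (in the sense of \refS{Supper}, i.e.\ a self-avoiding alternating sequence) from $u$ to $v$, so the indicator is bounded by the number of such paths. Summing over ordered pairs of distinct vertices, and noting that every path of length $\ell\ge1$ has distinct endpoints, so that each such path is counted exactly once (for the ordered pair of its two endpoints; a loop is not a path, since its two vertices coincide), we get
\begin{equation*}
  \sum_{u\neq v}\ett{u,v\text{ lie in the same component}}\le\sum_{u\neq v}\#\set{\text{paths from }u\text{ to }v}=\sum_{\ell\ge1}P_\ell ,
\end{equation*}
with $P_\ell$ as in \refL{LEP}. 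Hence $\sus(\gnddx)\le 1+\frac1n\sum_{\ell\ge1}P_\ell$.

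Finally I would take expectations and insert the bound $\E P_\ell\le n\nun^{\ell-1}/\mun^{\ell-2}$ of \refL{LEP}:
\begin{equation*}
  \E\bigpar{\sus(\gnddx)}\le 1+\frac1n\sum_{\ell\ge1}\E P_\ell
  \le 1+\sum_{\ell\ge1}\frac{\nun^{\ell-1}}{\mun^{\ell-2}}
  =1+\mun\sum_{\ell\ge1}\Bigparfrac{\nun}{\mun}^{\ell-1}.
\end{equation*}
If $\nun<\mun$ the geometric series sums to $(1-\nun/\mun)\qw$, giving $1+\mun^2/(\mun-\nun)$; if $\nun\ge\mun$ the series diverges, and the claimed bound is $+\infty$ and thus trivially true. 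In both cases the right-hand side is $1+\mun^2/(\mun-\nun)_+$, as asserted.

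The only genuinely delicate point is the pathwise inequality together with the bookkeeping of orientations and degenerate ($u=v$, loop) terms; once that is in place, the rest is just \refL{LEP} and summing a geometric series. (Note in particular that the argument applies verbatim to $\gnddx$, which is all that is needed.)
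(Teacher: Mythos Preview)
Your proof is correct and follows essentially the same approach as the paper: bound $\sus$ pathwise by $1+\frac1n\sum_{\ell\ge1}P_\ell$, then apply \refL{LEP} and sum the geometric series. The only difference is that the paper cites \cite[Lemma 4.6]{SJ232} for the inequality $\sus(G)\le\frac1n\sum_{\ell\ge0}P_\ell$ whereas you derive it directly from the connected-pairs formulation; the paper also singles out the trivial edge case $\mun=0$, which you fold into $\nun\ge\mun$.
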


\begin{proof}
  By \cite[Lemma 4.6]{SJ232} (which trivially extends to multigraphs),
  \begin{equation*}
	{\sus(\gnddx)}
\le \frac1n\sum_{\ell=0}^\infty P_\ell(\gnddx)
=1+ \frac1n\sum_{\ell=1}^\infty P_\ell(\gnddx).
  \end{equation*}
Hence, using %$P_0(\gnddx)=n$ and 
\refL{LEP} for $\ell\ge1$,
  \begin{equation*}
	\E\bigpar{\sus(\gnddx)}\le 
1+\sum_{\ell=1}^\infty\frac{\E P_\ell(\gnddx)}n
\le
1+\sum_{\ell=1}^\infty\frac{\nun^\elli}{\mun^\ellii}.
  \end{equation*}
If $\mun>\nun$, the geometric series sums to $\mun/(1-\nun/\mun)=\mun^
2/(\mun-\nun)$, and the result follows.
The case $\nun\ge\mun>0$ is trivial, since the right hand side of
\eqref{lupp} is $\infty$, and the
case $\mun=0$ is trivial too, since then there are no edges at all and
thus $\sus(\gnddx)=1$. 
\end{proof}

\section{Duality}\label{Sdual}
Let, as before, $\cc1$ be the largest component of \gndd; if there is
a tie we for definiteness choose the component with maximal size that
contains the vertex with largest label.
Consider the complement of $\cc1$; we denote this random graph by
$\hG\ndd\=\gndd\setminus\cc1$. This graph has thus the random vertex
set $[n]\setminus\cc1\co$, where $[n]\=\set{1,\dots,n}$ and we in this
section denote the vertex set of a graph $\cC$ by $\cC\co$.

We have defined \gndd\ with the vertex set $[n]$. %\=\set{1,\dots,n}$.
Of course, the definition generalizes to an arbitrary finite vertex
set $A$ and a degree sequence $\dd=(d_i)_{i\in A}$; we denote this
random graph by $G(A,\dd)$.
Is $A$ is a subset of $[n]$, and $\dd=\dn$, let $\dd\restr A$ be the
sequence $(d_i)_{i\in A}$.

We  construct, given $n$ and $\dd=\dn$, a random graph
$\tG\ndd$ by first constructing $\gndd$ and finding its largest
component $\cc1$; we then, given $\cc1$, 
let $A\=[n]\setminus\cc1$ and 
construct a new random graph $G(A,\dd\restr{A})$
and take that as our random graph $\tG\ndd$.
Hence $\tG\ndd$ is a random graph where both the vertex set and the
edge set are random, but conditioned on the vertex set, it is a
uniform random
graph with given vertex degrees.
Our version of the duality theorem is that $\hG\ndd$ and $\tG\ndd$
are equal \whp, with a suitable coupling. This is a precise version of
saying that $\hG\ndd$ conditioned on its vertex set almost is a
uniform random graph with given vertex degrees.

\begin{theorem}
  \label{Tdual}
Suppose that \refCNN\ holds, and that $\nuoo>\muoo$.
With the notations above, it is possible to couple $\hG\ndd$ and
$\tG\ndd$ such that they coincide w.h.p.
Furthermore, we may assume, by another coupling, that $\tG\ndd$
conditioned on its order and degree sequence satisfies \refCNN,
%Conditioned on the order and degree sequence of $\tGx\ndd$, \refCNN\
%then holds \as, 
with $p_k$ replaced by
$\hp_k\= p_k\kk^k/g(\kk)$.
Let $\hD$ be a random variable with this distribution:
\begin{equation}
  \P(\hD=k)=\hp_k\=\frac{ p_k\kk^k}{g(\kk)},
\qquad k\ge0.
\end{equation}
Then $\hD$ has \pgf
\begin{equation}\label{hg}
  \hg(x)\=\E x^{\hD}=\frac{g(\kk x)}{g(\kk)},
\end{equation}
and
\begin{align}
  \hmuoo&\=\E\hD=\hg'(1)=\frac{\kk g'(\kk)}{g(\kk)}
=\frac{\kk^2 \muoo}{g(\kk)},
\label{muq}\\
\hnuoo&\= \E\hD(\hD-1)=\hg''(1)=\frac{\kk^2 g''(\kk)}{g(\kk)}.
\label{nuq}
\end{align}
Moreover, $\hnuoo<\hmuoo$, so $\tG\ndd$ is subcritical.
\end{theorem}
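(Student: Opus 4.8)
The plan is to separate the claim into a degree-sequence part, which is pure analysis once \refT{Tgiant} is granted, and the coupling of $\hG\ndd$ with $\tG\ndd$, which carries all the real content and is in essence the theorem of \citet{MR98}. I would do the analysis first. Write $v_k(G)$ for the number of vertices of degree $k$ in $G$. By \refT{Tgiant}, $\ccc1/n\pto 1-g(\kk)$ and $v_k(\cc1)/n\pto p_k(1-\kk^k)$, so for each fixed $k$,
\begin{equation*}
  \frac{n-\ccc1}{n}\pto g(\kk),
  \qquad
  \frac{n_k-v_k(\cc1)}{n}\pto p_k\kk^k .
\end{equation*}
Since $p_1>0$ and $0<\kk<1$, $g(\kk)=\sum_k p_k\kk^k\ge p_0+p_1\kk>0$, so dividing gives $|\tG\ndd|/n\pto g(\kk)>0$ and $v_k(\tG\ndd)/|\tG\ndd|\pto p_k\kk^k/g(\kk)=\hp_k$, which is \refCC{C1p} for $\tG\ndd$ with limit $\hp$ (and $\sum_k\hp_k=1$ since $\sum_k p_k\kk^k=g(\kk)$). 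Moreover $\hp_1=p_1\kk/g(\kk)>0$ gives \refCC{C1p1}, and $\sum_k k\hp_k=\kk g'(\kk)/g(\kk)$ lies in $(0,\infty)$ — finite because $g'(\kk)\le g'(1)=\muoo$, positive because $g'(\kk)\ge p_1>0$ — which is \refCC{C1l}. Differentiating $\hg(x)=g(\kk x)/g(\kk)$ at $x=1$ and inserting \eqref{kk} gives \eqref{hg}, \eqref{muq} and \eqref{nuq}; and \refCC{C1d2} will follow from \refCN{C2}, verified next.

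For \refCN{C2} one must upgrade the pointwise limits $v_k(\tG\ndd)/n\pto p_k\kk^k$ to convergence of the weighted sum $\tfrac1n\sum_k k(k-1)v_k(\tG\ndd)$. I would argue by dominated convergence in probability: $0\le v_k(\tG\ndd)\le n_k$ deterministically, while by \refCN{C2} the deterministic tail $\sum_{k>K}k(k-1)n_k/n\to\sum_{k>K}k(k-1)p_k$, which tends to $0$ as $K\to\infty$; so given $\eps>0$ one fixes $K$ and then $n$ large to force $\sum_{k>K}k(k-1)v_k(\tG\ndd)/n<\eps$ deterministically, whereas the finite sum $\sum_{k\le K}k(k-1)v_k(\tG\ndd)/n\pto\sum_{k\le K}k(k-1)p_k\kk^k$. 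Hence $\tfrac1n\sum_k k(k-1)v_k(\tG\ndd)\pto\kk^2 g''(\kk)$, and dividing by $|\tG\ndd|/n\pto g(\kk)$ gives $\nun(\tG\ndd)\pto\kk^2 g''(\kk)/g(\kk)=\hnuoo$, so $\tG\ndd$ satisfies \refCNB\ with limit $\hp$.

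Subcriticality then amounts to $\hmuoo-\hnuoo=\kk^2\bigpar{g'(1)-g''(\kk)}/g(\kk)>0$, i.e.\ $g''(\kk)<g'(1)=\muoo$. For this set $h(x)\=g'(x)-\muoo x$; it is convex on $[0,1]$ (as $g'''\ge0$), with $h(\kk)=0$ by \eqref{kk}, $h(1)=0$, and $h'(1)=g''(1)-\muoo=\nuoo-\muoo>0$. Convexity gives $h\le0$ on $[\kk,1]$, and $h\not\equiv0$ there since $h'(1)>0$, so $h(x_0)<0$ for some $x_0\in(\kk,1)$; convexity again yields $h'(\kk)\le h(x_0)/(x_0-\kk)<0$, i.e.\ $g''(\kk)<\muoo$. (This is the same convexity of $g'$, equivalently of $G=g'/\muoo$, used in the proof of \refT{Tbp} to see that $\bp$ conditioned on extinction is subcritical; that process is exactly the branching process attached to $\tG\ndd$, with general offspring mean $\hnuoo/\hmuoo=g''(\kk)/\muoo<1$.) Hence $\hnuoo<\hmuoo$.

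The coupling is where I expect the work to lie. I would run the usual sequential component exploration of $\gnddx$, revealing half-edge pairs one at a time: the configuration model is memoryless, so at every stage the still-unpaired half-edges form a uniform random matching, whence the multigraph induced on the vertices not yet touched is, conditionally on that vertex set and its degrees, a fresh copy of $\gnddx$ on them. By \refT{Tgiant} there is w.h.p.\ a unique component of size $\ge\eps n$ while all others are $o(n)$, and this is exactly what lets the exploration be coordinated so that, w.h.p., once the giant has been peeled off, the matching left among the untouched vertices is distributed as $\gnddx(A,\dd\restr{A})$, where $A$ is the vertex set of $\hG\ndd$. Since by the paragraphs above $\dd\restr{A}$ satisfies \refCNB\ with the \emph{subcritical} limit $\hp$, \refT{Tgiant}\ref{Tgiantb} applied conditionally to $G(A,\dd\restr{A})$ shows its largest component is $o(|A|)$, so the event one must condition on ($A$ carries no component of size $\ge\eps n$) has probability $1-o(1)$ and the conditioning is harmless in the limit. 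Passing from $\gnddx$ to $\gndd$ is routine, since simplicity of $\gnddx$ is simplicity on $\cc1$ together with simplicity on $A$ (there being no edges between), so conditioning on simplicity turns $\gnddx(A,\dd\restr{A})$ into $G(A,\dd\restr{A})=\tG\ndd$, giving $\hG\ndd=\tG\ndd$ w.h.p. Making ``remove the giant and what remains is a fresh random graph'' precise is the exploration bookkeeping of \citet{MR98}; the extra ingredient needed here — convergence of the whole degree sequence of the complement, and with it \refCN{C2} — is supplied by the analysis above, and is the only point where a slightly sharper form than \cite{MR98} is required.
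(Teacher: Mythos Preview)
Your analysis of the degree sequence of $\tG\ndd$ and the subcriticality argument are essentially the paper's own, modulo presentation (the paper gets $g''(\kk)<\muoo$ by integrating $g''$ over $[\kk,1]$ rather than via your auxiliary $h$, but it is the same convexity). One slip: the theorem assumes only \refCNN, not \refCN{C2}, so your route to \refCC{C1d2} for $\tG\ndd$ via \refCN{C2} invokes an hypothesis not granted here. This is harmless, since \refCC{C1d2} for $\tG\ndd$ follows trivially from $\sum_{i\in\tG}d_i^2\le\sum_i d_i^2=O(n)=O(|\tG\ndd|)$; your \refCN{C2} argument is correct but belongs to the proof of \refT{Tsus}, where \refCN{C2} \emph{is} assumed, and indeed the paper places it there.

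The real divergence is in the coupling. The paper does not use exploration or invoke \citet{MR98}. It argues combinatorially in $\gndd$ directly: conditioned on $\cc1\co=A$, the restriction $\gndd\restr{\vna}$ is exactly $G(\vna,\dd\restr{\vna})$ conditioned on having no component $\succ A$ (in a fixed total order on subsets refining size), so by a one-line conditioning lemma the conditional total-variation distance from the unconditioned $G(\vna,\dd\restr{\vna})$ is at most $\P(\hea)$, the probability the latter \emph{does} have such a component. Averaging over $A$, the TV distance is bounded by $\P(\ccc1<rn)+\sum_{|A|\ge rn}\P(\hea)\P(\cea)$, and the key trick is to rewrite that sum as $\E(N-1)_+$ where $N$ counts components of size $\ge rn$ in the \emph{original} $\gndd$; since $N\le r^{-1}$ deterministically and $N\ge2$ iff $\ccc2\ge rn$, this is at most $r^{-1}\P(\ccc2\ge rn)\to0$ by \refT{Tgiant}.

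Your route --- establish subcriticality of the complement first, then apply \refT{Tgiant}\ref{Tgiantb} conditionally to make $\P(\hea)\to0$ --- is sound in outline but requires applying \refT{Tgiant}\ref{Tgiantb} to a \emph{random} degree sequence, which needs a Skorohod step you do not spell out, and your passage from $\gnddx$ to $\gndd$ via factoring simplicity, while correct, is where the paper's choice to work in $\gndd$ from the start pays off. The paper's counting trick sidesteps all of this: it never invokes subcriticality of the complement to prove the coupling, pulling everything back to $\ccc2=o(n)$ in the original graph. Your approach buys the intuitive picture (peel off the giant, what remains is fresh and subcritical); the paper's buys a short self-contained argument with no conditional limit theorems.
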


Before giving the proof, we give a simple and well-known result on
conditioning. Recall that the \emph{total variation distance} between
two random variables $X$ and $Y$ (taking values in any common 
space) is 
$$\dtv(X,Y)\=\sup_A\bigpar{\P(X\in A)-\P(Y\in A)},$$ 
taking
the supremum over all measurable sets $A$. Recall further that the
existence of a coupling with $\hG\ndd=\tG\ndd$ \whp\ is equivalent to
$\dtv(\hG\ndd,\tG\ndd)\to0$.

\begin{lemma}
  \label{Ldtv}
If $X$ is any random variable (with values in any space) and $\cE$ is
any event with $\P(\cE)>0$, then
$\dtv\bigpar{(X\mid \cE),X}\le 1-\P(E)$.
\end{lemma}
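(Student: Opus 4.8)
The plan is to compute $\dtv\bigpar{(X\mid\cE),X}$ directly from the definition as a supremum over measurable sets, and to bound the quantity $\P(X\in A\mid\cE)-\P(X\in A)$ uniformly in $A$. Write $q\=\P(\cE)$, so $0<q\le1$. For any measurable set $A$ (in the value space of $X$), we have
\begin{equation*}
  \P(X\in A)=\P(X\in A,\cE)+\P(X\in A,\cE^{\comp})
  =q\,\P(X\in A\mid\cE)+\P(X\in A,\cE^{\comp}).
\end{equation*}
Rearranging gives
\begin{equation*}
  \P(X\in A\mid\cE)-\P(X\in A)
  =(1-q)\,\P(X\in A\mid\cE)-\P(X\in A,\cE^{\comp}).
\end{equation*}

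Now I would observe that the right-hand side is sandwiched: since $\P(X\in A,\cE^{\comp})\ge0$ and $\P(X\in A\mid\cE)\le1$, the expression is at most $(1-q)\cdot1-0=1-q$; and since $\P(X\in A\mid\cE)\ge0$ and $\P(X\in A,\cE^{\comp})\le\P(\cE^{\comp})=1-q$, it is at least $0-(1-q)=-(1-q)$. Hence $\bigabs{\P(X\in A\mid\cE)-\P(X\in A)}\le 1-q$ for every measurable $A$. Taking the supremum over $A$ yields $\dtv\bigpar{(X\mid\cE),X}\le 1-q=1-\P(\cE)$, which is the claim. (In fact the bound $1-q$ is attained by taking $A$ to be, up to a null set, the event $\cE$ itself when $\cE$ lies in the $\sigma$-algebra generated by $X$; but only the inequality is needed.)

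There is no real obstacle here — the only point requiring a line of care is that $\dtv$ as defined in the paper is the one-sided supremum $\sup_A\bigpar{\P(X\in A)-\P(Y\in A)}$ rather than the symmetrized absolute value, so strictly one need only bound $\P(X\in A\mid\cE)-\P(X\in A)\le 1-q$, which is the easier of the two inequalities above (it uses just $\P(X\in A\mid\cE)\le1$ and $\P(X\in A,\cE^{\comp})\ge0$). I would present the two-sided bound anyway since it is no harder and makes the statement robust to the convention.
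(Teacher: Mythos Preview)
Your proof is correct and follows essentially the same approach as the paper: both bound the one-sided difference $\P(X\in A\mid\cE)-\P(X\in A)$ directly from elementary probability inequalities. The paper instead writes this difference as $\P(\cA\cap\cE)/\P(\cE)-\P(\cA)$, bounds $\P(\cA\cap\cE)\le\P(\cA)\bmin\P(\cE)$, and maximizes over $\P(\cA)$; your algebraic rearrangement is a minor variant that avoids the maximization step but is otherwise the same idea.
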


\begin{proof}
  For any event $\cA$ of the type \set{X\in A},
  \begin{equation*}
	\P(\cA\mid\cE)-\P(\cA)
=\frac{\P(\cA\cap\cE)}{\P(\cE)}-\P(\cA)
\le
\frac{\P(\cA)\bmin\P(\cE)}{\P(\cE)}-\P(\cA).
  \end{equation*}
The \rhs\ is a function of $\P(\cA)$ that is maximal for $\P(\cA)=\P(\cE)$,
when it equals $1-\P(\cE)$.
\end{proof}

\begin{proof}[Proof of \refT{Tdual}]
  Define a total order $\prec$ on the subsets of $\xn$ by defining
  $A\prec B$ if $|A|<|B|$ or $|A|=|B|$ and $\max A<\max B$.
Thus $\cC_1$ is by definition the component of \gndd{} whose vertex
  set is maximal in this order.

Let $A\subseteq \vn$.
%For convenience, we write in this proof the vertex set of a graph
%$\cC$ as $\cC\co$.
Conditioned on $\cc1\co=A$, the complement
$\gndd\setminus\cc1=\gndd\restr{\vn\setminus A}$ is a random graph on
the vertex set $\vna$ with a given degree sequence $(d_i)_{i\in\vna}$.
Moreover, it may be any such graph except that it must not contain a
component $\cC$ with $\cC\co\succ A$; furthermore, all permitted
graphs have the same probability.
Thus, conditioned on $\cc1\co=A$, 
\begin{equation*}
  \hG\ndd=\gndd\restr{\vna}
\eqd\bigpar{G(\vna,\dd\restr{\vna})\bigm|\heac},
\end{equation*}
where $\heac$ is the complement of the event $\hea$ that 
$G(\vna,\dd\restr{\vna})$ contains a component $\cC$ with
$\cC\co\succ A$.

On the other hand, by definition, conditioned on $\cc1\co=A$
we have $\tG(n,\dd)=G(\vna,\dd\restr{\vna})$.
Hence, by \refL{Ldtv}, the total variation distance between
$\hG\ndd$ and $\tG\ndd$, both conditioned on $\cc1\co=A$, is
%\begin{equation*}
%  \begin{split}
{\multlinegap=0pt
	\begin{multline*}
\dtv\Bigpar{\bigpar{\hG\ndd\mid\cc1\co=A},
\bigpar{\tG\ndd\mid\cc1\co=A}}
\\=
\dtv\Bigpar{
\bigpar{G(\vna,\dd\restr{\vna})\bigm|\heac},
G(\vna,\dd\restr{\vna})}	
\le1-\P(\heac)=\P(\hea).	  
	\end{multline*}}
%  \end{split}
%\end{equation*}
Taking the expectation over $\cc1\co$ we find
\begin{equation}\label{hp0}
  \begin{split}
\dtv\bigpar{\hG\ndd,\tG\ndd}
&\le \E \dtv\Bigpar{\bigpar{\hG\ndd\mid\cc1\co},
\bigpar{\tG\ndd\mid\cc1\co}}
\\&
\le\E\P(\hex{\cc1\co})
=\sum_{A\subseteq[n]}\P(\hea)\P(\cc1\co=A).
  \end{split}
\end{equation}
We split this sum into two parts.
Let $r\=\rhooo/2$; thus $r>0$ and $\P(\ccc1\ge rn)\to1$.
Further, let $\cea$ be the event that $A=\cC\co$ for some component
$\cC$ of \gndd, and note that $\cc1\co=A$ implies $\cea$.
Thus
\begin{equation}\label{hp2}
  \begin{split}
%\dtv\bigpar{\hG\ndd,\tG\ndd}
%\le\E\P(\hex{\cc1\co})=
\sum_{A\subseteq[n]}\P(\hea)\P(\cc1\co=A)
\le\P(\ccc1<rn) + \sum_{|A|\ge rn}\P(\hea)\P(\cea).
  \end{split}
\end{equation}
Conditioned on $\cea$, the complement $\gndd\restr{\vna}$ of $A$ has
the same distribution as $G(\vna,\dd_{\vna})$, and thus
\begin{equation*}
  \begin{split}
\P(\hea)
&=\P\bigpar
 {\text{\gndd\ contains a component $\cC$ with $\cC\co\succ A$}\bigm|\cea}
\\&
=\P\Bigpar{\bigcup_{B\succ A}\ceb\Bigm|\cea}.
  \end{split}
\end{equation*}
Hence, 
\begin{equation}\label{hp1}
  \begin{split}
\sum_{|A|\ge rn}\P(\hea)\P(\cea)
=
\sum_{|A|\ge rn}\P\bigpar{\bigcup_{B\succ A}\ceb\cap\cea}
=
\E\sum_{|A|\ge rn}\etta\bigsqpar{\cea\cap\bigcup_{B\succ A}\ceb}.
  \end{split}
\end{equation}
Let $N$ be the number of components of size $\ge rn$ in \gndd. If we
order these components as $A_1\prec\dots \prec A_N$, then the
indicator in \eqref{hp1} is 1 exactly when $A$ is one of
$A_1,\dots,A_{N-1}$, so the sum is $(N-1)_+$. Further, since components
are disjoint, $N\le n/(rn)=r\qw$, and thus $(N-1)_+\le
r\qw\ett{N\ge2}$. However, $N\ge2$ if and only if the \emph{second}
largest component $\cc2$ is larger than $rn$.
Consequently, by \eqref{hp0}, \eqref{hp2}, \eqref{hp1},
\begin{equation*}
  \begin{split}
\dtv\bigpar{\hG\ndd,\tG\ndd}
&\le\P(\ccc1<rn) 
+r\qw\P(N\ge2)
\\&
=\P(\ccc1<rn) 
+r\qw\P(\ccc2\ge rn),
  \end{split}
\end{equation*}
where, by \refT{Tgiant}, both terms on the \rhs\ tend to 0.
This shows the existence of a copuling with $\hG\ndd=\tG\ndd$ \whp

By the Skorohod coupling theorem \cite[Theorem 4.30]{Kallenberg}, 
we may assume that the random graphs for different $n$ are coupled
such that the limits in \refT{Tgiant}\ref{Tgianta} hold a.s. 
Let $\tilde\dd\=\dd\restr{\xn\setminus \cc1\co}$ be the degree
sequence used to define $\tGx\ndd$, let
$\tn\=|\tGx\ndd|$ be its length and let
$\tn_k$ be the number of elements $d_i=k$ in it.
Then 
$\tn=n-\ccc1$ and $\tn_k=n_k-v_k(\cc1)$, and thus, by \refT{Tgiant}
with the assumed coupling,
$\tn/n\asto g(\kk)$ and $\tn_k/n\asto p_k\kk^k$, $k\ge0$,
and thus $\tn_k/\tn\asto \hp_k\= p_k\kk^k/g(\kk)$. Consequently, 
conditioned on the order and degree sequence of $\tGx\ndd$, \refCNN\
then holds \as, with $p_k$ replaced by
$\hp_k$.

The formulas \eqref{hg}--\eqref{nuq} are straightforward.

Finally, since \gndd{} is supercritical, $p_k>0$ for some $k\ge3$, and
thus $g''(x)$ is strictly increasing. Hence, recalling 
$g'(1)=\muoo$ and $g'(\kk)=\kk\muoo$,
\begin{equation*}
  (1-\kk)\muoo = g'(1)-g'(\kk)=\int_\kk^1g''(x)\ddd x
>(1-\kk)g''(\kk).
\end{equation*}
Consequently, $\muoo>g''(\kk)$ and \eqref{muq}--\eqref{nuq} show that
$\muqoo>\nuqoo$.
\end{proof}

\section{Proof of main theorems}\label{Spf}
\begin{proof}[Proof of Theorems \ref{Tsus} and \ref{Tsus2}]
By \refR{Rsimple}, it suffices to prove Theorems \ref{Tsus} and
\ref{Tsus2} for \gnddx.

Consider first the subcritical case $\nuoo<\muoo$. 
By \refC{Cbp}, $\susoo=\sus(\bp)$ and $\susqoo=\susq(\bp)$.
By \refL{Lner}(i), if
$a<\susoo\=\sus(\bp)$, then $\sus(\gnddx)>a$ \whp, while \refL{Lupp},
\eqref{mn} and \refCN{C2}
show that 
\begin{equation*}
\E\sus(\gnddx) \le 1+\frac{\mun^2}{(\mun-\nun)_+}  
\to 1+\frac{\muoo^2}{\muoo-\nuoo}  =\susoo.
\end{equation*}
These upper and lower bounds imply, see \citetq{Lemma 4.2}{SJ232},
that $\sus(\gnddx)\lito\susoo$. 
The same argument holds for $\susq(\gnddx)$, by Lemmas \ref{Lner}(ii) and
\ref{Lupp} together with $\susq\le\sus$.

In the critical case $\nuoo=\muoo$, \refL{Lner} yields
$\sus(\gnddx)\ge\susq(\gnddx)>a$ \whp{} for any finite $a$, and thus 
$\sus(\gnddx),\,\susq(\gnddx)\pto\infty$.

In the supercritical case, \refL{Lner} shows $\sus(\gnddx)\pto\infty$.
%In order to treat $\susq(\gnddx)$, we note that
For $\susq$ we consider $\gndd$ and note that
\begin{equation}
  \label{p1}
\susq(\gndd)\=\frac1n\sum_{\cC_i\neq\cc1}\ccc i^2
=\frac{n-\ccc1}n\sus\bigpar{\hG\ndd}.
\end{equation}
Here $(n-\ccc1)/n\pto g(\kk)$ by \refT{Tgiant}, and by \refT{Tdual}, we
may couple $\hG\ndd$ and $\tG\ndd$ such that \whp\ 
they coincide and thus
\begin{equation}
  \label{p3}
\sus\bigpar{\hG\ndd}=\sus\bigpar{\tG\ndd}
\qquad\text{\whp}
\end{equation}

We may by \refT{Tdual} assume that $(n-\ccc1)/n\to g(\kk)$ a.s.\  and that
\refCNN\ holds for $\tG\ndd$, 
conditioned on its order and degree sequence, 
with $p_k$ replaced by
$\hp_k$.
Further, for any constant $A$,
\begin{equation*}
  \frac1{|\tG\ndd|}\sum_{i\in V(\tG\ndd)} d_i^2 \ett{d_i\ge A}
\le \frac{1}{n(g(\kk)+o(1))}\sum_i d_i^ 2\ett{d_i>A},
\end{equation*}
so the uniform integrability of $D_n$ implies that also \refCN{C2}
holds for the random graph $\tG\ndd$ 
conditioned on its order and degree sequence.

We can thus apply the already proven result for $\sus$ to $\tG\ndd$, 
conditioned on its order and degree sequence, and conclude that
\begin{equation}
  \label{p2}
\sus\bigpar{\tG\ndd}
\pto
1+\frac{\hmuoo}{1-\hnuoo/\hmuoo}.
\end{equation}
By \eqref{p1},  \eqref{p3}, \eqref{p2}, 
\eqref{muq}, \eqref{nuq} and \eqref{b5b}, we thus obtain
\begin{equation*}
\susq\bigpar{\gndd}
\pto
g(\kk)\Bigpar{1+\frac{\hmuoo}{1-\hnuoo/\hmuoo}}
=g(\kk)+\frac{\kk g'(\kk)}{1-g''(\kk)/\muoo}
=\susq(\bp). \qedhere
\end{equation*}
\end{proof}

\section{Approximation}\label{Sapp}
We have assumed \refCN{C1}, including convergence of the degree
distribution $D_n$. This is convenient, but it is also interesting to
regard the result as an approximation for finite $n$, without assuming
convergence of $D_n$. For simplicity we consider only $\sus$, leaving
the similar but notationally more complicated result for $\susq$ to
the reader.

In order to treat convergence to $\infty$, we let $\gdd$ be a metric
on the compact space $[1,\infty]$, for example $\gdd(x,y)\=\abs{x\qw-y\qw}$.

\begin{theorem}
  \label{Tsus3}
Suppose that $\dd=\dn$ are given for $n\ge1$ such that the random variables
$D_n$ are uniformly square integrable and that
$\liminf_\ntoo\P(D_n=1)>0$.
Then
\begin{equation}\label{tsus3a}
  \gdd\Bigpar{\sus(\gndd),\,1+\frac{\mun^2}{(\mun-\nun)_+}}
\pto0.
\end{equation}
If further $\mun-\nun\ge c>0$, for some fixed $c$, 
then also
\begin{equation}\label{tsus3b}
  \E\Bigabs{\sus(\gndd)-\Bigpar{1+\frac{\mun^2}{\mun-\nun}}}
\to0,
\end{equation}
and thus
$\sus(\gndd)=1+\xfrac{\mun^2}{(\mun-\nun)}+\op(1)$.
\end{theorem}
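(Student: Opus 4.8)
The plan is to deduce \refT{Tsus3} from \refT{Tsus} by a subsequence argument, the point being that uniform square integrability makes the family of degree distributions relatively compact. Recall the elementary principle that a sequence of real-valued random variables converges in probability (and likewise a sequence of expectations converges) to a prescribed target sequence if and only if every subsequence admits a further subsequence along which it does. Applying this to $\gdd\bigpar{\sus(\gndd),t_n}$, where $t_n\=1+\mun^2/(\mun-\nun)_+\in[1,\infty]$, and --- for the second part, where the hypothesis $\mun-\nun\ge c>0$ makes $t_n=1+\mun^2/(\mun-\nun)$ --- to $\E\abs{\sus(\gndd)-t_n}$, it suffices to fix an arbitrary subsequence and extract from it a further subsequence along which the stated convergences hold.

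Along the fixed subsequence, uniform square integrability gives $\sup_n\E D_n^2<\infty$, hence tightness of $(D_n)$, and a diagonal extraction yields a further subsequence with $n_k/n\to p_k$ for every $k\ge0$. Tightness forces $\sum_k p_k=1$, so $(p_k)$ is a probability distribution; letting $\Doo$ have this law we get $D_n\dto\Doo$, and uniform square integrability upgrades this, by standard facts (cf.\ \refR{RC1}), to $\E D_n\to\E\Doo$ and $\E D_n^2\to\E\Doo^2<\infty$, i.e.\ $\mun\to\muoo$ and $\nun\to\nuoo<\infty$. Moreover $\liminf_n\P(D_n=1)>0$ is inherited by subsequences, so $p_1=\lim n_1/n>0$, whence also $\muoo\ge p_1>0$. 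Thus \refCNB\ hold along this subsequence and \refT{Tsus} applies to $\gndd$.

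It remains to run the three cases. Subcritical ($\nuoo<\muoo$): then $\mun-\nun\to\muoo-\nuoo>0$, so eventually $t_n=1+\mun^2/(\mun-\nun)\to\susoo=1+\muoo^2/(\muoo-\nuoo)\in[1,\infty)$, while \refT{Tsus}(i) gives $\sus(\gndd)\lito\susoo$ and hence $\sus(\gndd)\pto\susoo$; since $\gdd$ metrizes $[1,\infty]$, $\gdd\bigpar{\sus(\gndd),t_n}\pto0$, and $\E\abs{\sus(\gndd)-t_n}\le\E\abs{\sus(\gndd)-\susoo}+\abs{t_n-\susoo}\to0$. Critical ($\nuoo=\muoo$): then $\mun\to\muoo>0$ while $\mun-\nun\to0$, so $t_n\to\infty$ in $[1,\infty]$ (on indices with $\mun\le\nun$, $t_n=\infty$; on the others $\mun^2/(\mun-\nun)\to\infty$), while \refT{Tsus}(ii) gives $\sus(\gndd)\pto\infty$; hence $\gdd\bigpar{\sus(\gndd),t_n}\pto0$. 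Supercritical ($\nuoo>\muoo$): then $\mun-\nun\to\muoo-\nuoo<0$, so eventually $(\mun-\nun)_+=0$ and $t_n=\infty$, while \refT{Tsus}(iii) gives $\sus(\gndd)\pto\infty$; hence again $\gdd\bigpar{\sus(\gndd),t_n}\pto0$. This proves \eqref{tsus3a} along the subsequence; and when $\mun-\nun\ge c>0$ for all $n$ the limit satisfies $\muoo-\nuoo\ge c>0$, so only the subcritical case can occur, giving \eqref{tsus3b}. Since $L^1$ convergence implies convergence in probability, the last displayed assertion follows too.

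The step I expect to be the genuine obstacle is the compactness bookkeeping of the second paragraph: one must check that uniform square integrability survives passage to subsequences and delivers \emph{both} an honest probability limit $(p_k)$ with $\sum_k p_k=1$ (no escape of mass to infinity) \emph{and} $\nun\to\nuoo$ with $\nuoo<\infty$ (no escape of second-moment mass), and that $\liminf_n\P(D_n=1)>0$ descends to $p_1>0$ so that \refCC{C1p1} holds in the limit. Once \refCNB\ are in force along the sub-subsequence, the remainder --- matching $\lim t_n$ with $\susoo$ in the subcritical regime and observing the convention ``denominator $0$ means $+\infty$'' in the critical and supercritical regimes, all inside the metric space $([1,\infty],\gdd)$ --- is routine.
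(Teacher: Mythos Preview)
Your argument is correct and follows essentially the same route as the paper: extract a subsequence along which $D_n\dto\Doo$, use uniform square integrability to verify \refCNB{} (in particular $\nun\to\nuoo$) and the hypothesis $\liminf\P(D_n=1)>0$ to secure $p_1>0$, apply \refT{Tsus} in each regime, and conclude by the sub-subsequence principle. The only cosmetic difference is that the paper merges the critical and supercritical cases into a single case $\nuoo\ge\muoo$, whereas you treat them separately.
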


\begin{proof}
  The uniform square integrability implies that $\sup_n\E
  D_n^2<\infty$; hence the variables $D_n$ are tight, and we may by
  considering a subsequence assume that $D_n\dto \doo$ for some random
  variable $\doo$ on $\bbZ_{\ge0}$. However, this is exactly
  \refCC{C1p}; furthermore $\P(\doo=1)>0$, and the uniform square
  integrability of $D_n$ implies that Conditions \refand{C1}{C2} hold
(along the subsequence).
In particular, $\mun\to\muoo$ and $\nun\to\nuoo$.

If $\nuoo<\muoo$, then, by \refT{Tsus} applied to the subsequence,
$\sus(\gndd)\lito 1+\muoo^2/(\muoo-\nuoo)$. Further, 
$\mun^2/(\mun-\nun)_+\to\muoo^2/(\muoo-\nuoo)$, and both claims follow.

If $\nuoo\ge\muoo$, we have $\sus(\gndd)\pto\infty$ by \refT{Tsus};
further, $\mun\to\muoo>0$ and $(\mun-\nun)_+\to0$,
and thus
$\mun^2/(\mun-\nun)_+\to\infty$, and \eqref{tsus3a} follows in this
case too.

Hence, there is always a subsequence along which the results
hold. Since we may start by taking an arbitrary subsequence, the
results hold generally by the standard subsubsequence principle, see \eg{}
\cite[p.~12]{JLR}. 
\end{proof}

\section{Approaching criticality}\label{Scrit}

In order to study the critial behaviour more closely, we consider a
family of random graphs parametrized by a parameter $\gl$ besides
$n$. We consider asymptotics as \ntoo, and investigate how the limits
depend on $\gl$. More precisely, we consider for simplicity the
following case.

Let $h(x)$ be the probability generating function of a 
non-negative integer-valued random
variable $D$ such that $\infty>h''(1)=\E D(D-1)>h'(1)=\E D$; this thus
corresponds to a 
supercritical $\gndd$. Now, for a fixed $\gl\in(0,1]$, 
add an even number ${n(1-\gl)/\gl}+O(1)$ of vertices of degree 1; this
gives a random graph $\gndl$ with a corresponding
asymptotic degree distribution $\doo$ that has the \pgf
\begin{equation}\label{ggl}
\ggl(x)=(1-\gl)x+\gl h(x).  
\end{equation}
In particular (we will often omit the argument $\gl$ from the notation),
\begin{align}
\muoo=  \muoo(\gl)&=\ggl'(1)=(1-\gl)+\gl h'(1),\label{mul}
\\
  \nuoo=\nuoo(\gl)&=\ggl''(1)=\gl h''(1).\label{nul}
\end{align}
Consequently, the random graph \gndl\ is critical when $1-\gl+\gl
h'(1)=\gl h''(1)$, 
\ie, when $\gl=\glc$ given by
\begin{equation}\label{glc}
  \glc\=\frac1{1- h'(1)+h''(1)}\in(0,1),
\end{equation}
while it is subcritical for $\gl<\glc$ and supercritical for
$\gl>\glc$.

We consider the limits $\susoo(\gl)$ and $\susqoo(\gl)$ 
of $\sus(\gndl)$ and $\susq(\gndl)$ as \ntoo, and
investigate how they depend on $\gl$ as $\gl\to\glc$. (We thus let
first \ntoo{} and then $\gl\to\glc$. A related problem, not considered
here, is to let $\gl\to\glc$ and $\ntoo$ 
simultaneously.) 

For the subcritical case $\gl<\glc$ we have, by \refT{Tsus} and
\eqref{mul}--\eqref{glc},
\begin{equation*}
  \begin{split}
\susoo
=1+\frac{\muoo^2}{\muoo-\nuoo}	
=1+\frac{\muoo(\gl)^2}{1-\gl+\gl h'(1)-\gl h''(1)}	
=1+\frac{\muoo(\gl)^2}{1-\gl/\glc}	.
  \end{split}
\end{equation*}
For $\gl\upto\glc$, it follows that,
with $\muc\=\muoo(\glc)=\nuoo(\glc)>0$,
\begin{equation}\label{susc}
  \begin{split}
\susoo
\sim\frac{\glc\muoo(\glc)^2}{\glc-\gl}
%=\frac{\glc^3(h''(1))^2}{\glc-\gl}	.
=\frac{\glc\muc^2}{\glc-\gl}	.
  \end{split}
\end{equation}

In the supercritical case $\gl>\glc$, the parameter $\kk=\kk(\gl)$ is given by
$\ggl'(\kk)=\kk\ggl'(1)$, or, by \eqref{ggl},
\begin{equation}\label{sw1}
  1-\gl+\gl h'(\kk)=\kk(1-\gl+\gl h'(1));
\end{equation}
equivalently,
\begin{equation}\label{sw2}
  1-\kk=\gl(1-h'(\kk)-\kk+\kk h'(1)).
\end{equation}
We use $\eps\=1-\kk$ as parameter, and have thus $\kk=1-\eps$ and, by
\eqref{sw2} and \eqref{glc}, 
\begin{gather}\notag
  \frac1\gl
=
\frac{\eps+h'(1)-\eps h'(1)-h'(1-\eps)}{\eps}
=
1-h'(1)+\frac{h'(1)-h'(1-\eps)}{\eps},
\\
\frac1{\glc}- \frac1\gl
=
h''(1)-\frac{h'(1)-h'(1-\eps)}{\eps}.
\label{allan}
\end{gather}
As $\gl\downto\glc$, $\kk\upto1$ and thus $\eps=1-\kk\downto0$.
Moreover, $\ggl'(\kk)-\kk \ggl''(\kk)\to\gglc'(1)-\gglc''(1)=0$, and
thus by \refT{Tsus}(iii) and $\ggl'(\kk)=\kk\ggl'(1)$,
\begin{equation}\label{magnus}
\susqoo
\sim\frac{\kk(\ggl'(\kk))^ 2}{\ggl'(\kk)-\kk \ggl''(\kk)}
=\frac{(\ggl'(\kk))^ 2}{\ggl'(1)- \ggl''(\kk)}
\sim\frac{\muc^ 2}{\ggl'(1)- \ggl''(\kk)},
\end{equation}
where by \eqref{ggl}
\begin{equation}\label{ika}
  \begin{split}
\ggl'(1)- \ggl''(\kk)
&=
1-\gl+\gl h'(1)-\gl h''(\kk)
%\\&
=\gl\Bigpar{\frac1\gl - \frac1{\glc}+h''(1)-h''(\kk)}.
  \end{split}
%\raisetag{\baselineskip}
\end{equation}
Let $r(\eps)$ be the remainder term in the Taylor expansion
\begin{equation*}
  h(1-\eps)=1-\eps h'(1)+\frac{\eps^2}2 h''(1)-r(\eps),
\end{equation*}
and note that $r(0)=r'(0)=r''(0)=0$, while $r'''(\eps)=h'''(1-\eps)$.
Then \eqref{allan} and \eqref{ika} can be written
\begin{align}
  \frac1{\glc}-\frac1{\gl}& =\frac{r'(\eps)}{\eps}, \label{ku}
\\
\ggl'(1)- \ggl''(\kk)&\sim \gl\Bigpar{-\frac{r'(\eps)}{\eps}+r''(\eps)}.
\end{align}
These yield, using \eqref{magnus},
\begin{align*}
\gl-\glc&\sim \glc^2\frac{r'(\eps)}{\eps},
\\
\susqoo&\sim\frac{\muc^2}{\ggl'(1)- \ggl''(\kk)}
\sim \glc\qw\muc^2
\Bigpar{-\frac{r'(\eps)}{\eps}+r''(\eps)}\qw
\end{align*}
and thus, as $\gl\downto\glc$,
\begin{equation}\label{sov}
(\gl-\glc)\susqoo
\sim 
\glc\muc^2
% \frac{r'(\eps)/\eps}
%{r''(\eps)-\xfrac{r'(\eps)}{\eps}}.
 \frac{r'(\eps)}
{\eps r''(\eps)-r'(\eps)}.
\end{equation}

Since $r''(\eps)=\intoe r'''(t)\ddd t$ and
$r'(\eps)=\intoe(\eps-t)r'''(t)\ddd t$, \eqref{sov} can be written
\begin{equation}\label{sof}
\susqoo
\sim 
\frac{\glc\muc^2}{\gl-\glc}\cdot
 \frac{\intoe(\eps-t)r'''(t)\ddd t}
{\intoe t r'''(t)\ddd t}.
\end{equation}

If $\E D^3<\infty$, then with $b\=h'''(1)=\E D(D-1)(D-2)>0$ (because
$h$ is supercritical), as $\eps\to0$,
$r'''(\eps)\sim b$, $r''(\eps)\sim b\eps$
and $r'(\eps)\sim b \eps^2/2$; thus \eqref{sov} or \eqref{sof}
yields, as $\gl\downto\glc$
\begin{equation}\label{supc}
  \susqoo\sim\frac{\glc\muc^2}{\gl-\glc}.
\end{equation}
Combining \eqref{supc} and \eqref{susc}, we obtain as $\gl\to\glc$
from any side 
the symmetric asymptotic, assuming $\E D^3<\infty$,
\begin{equation}\label{symm}
  \susqoo\sim\frac{\glc\muc^2}{\abs{\gl-\glc}}.
\end{equation}
(Hence, in terminology from percolation theory and mathematical
physics, the critical exponent $\gamma$ equals 1.) 
Such symmetry between the subcritical and supercritical sides has been
observed in many different models, but there are also exceptions: for
example, for the CHKNS model, $\susq$ has finite limits as the parameter $\gl$
increases or decreases to the critical value, but the limits are
different and the derivative is finite on the supercritical side but
not on the subcritical side where there is a square-root singularity,
see \citet{CHKNS} and \cite[Section 6.3]{SJ232}. 
We shall see that also for $\gndd$ it is possible to have asymmetric
asymptotics when $\E D^3=\infty$. We consider some examples, with
less and less integrability of $D$ beyond $\E D^2<\infty$, which we
always assume. We let for convenience $c_1,c_2,\dots$ denote some
positive constants (depending on $D$) whose values we do not want to
write explicitly. 

\begin{example}
  \label{Easymm1}
Let $\P(D=k)\sim k^{-3-\ga}$  as $k\to\infty$, 
for some
$\ga\in(0,1)$. Then 
$r'''(\eps)\sim\Gamma(1-\ga)\eps^{\ga-1}$ as \epsto, and 
thus %$r''(\eps)\sim\ga\qw\Gamma(1-\ga)\eps^\ga$, and
\eqref{sof} yields, as $\gl\downto\glc$,
\begin{equation*}%\label{sof}
\susqoo
\sim 
\frac{\glc\muc^2}{\gl-\glc}\cdot
 \frac{\intoe(\eps-t)t^{\ga-1}\ddd t}
{\intoe t^{\ga}\ddd t}
=
\frac{\glc\muc^2}{\gl-\glc}\cdot
 \frac{\intoi(1-t)t^{\ga-1}\ddd t}
{\intoi t^{\ga}\ddd t}
=
\frac{\ga\qw\glc\muc^2}{\gl-\glc},
\end{equation*}
%$\susqoo\sim\ga\qw\frac{\glc\muc^2}{\gl-\glc}$, 
with the same exponent as on the subcritical side but a different constant.
\end{example}

\begin{example}
  \label{Easymm2}
Let $\P(D=k)\sim k^{-3}(\log k)^{-\ga}$  as $k\to\infty$, 
for some $\ga>1$. 
Then $r'''(\eps)\sim 1/(\eps|\loge|^\ga)$ as \epsto,  
and thus 
\begin{equation*}
\eps  r''(\eps)-{r'(\eps)} =\int_0^\eps t r'''(t)\ddd t
\sim\frac{\eps}{|\loge|^\ga},
\end{equation*}
while
$r'(\eps)\sim (\ga-1)\qw\eps/|\loge|^{\ga-1}$.
Hence, by \eqref{sov}, %for some constant $c_1>0$, 
\begin{equation*}%\label{sof}
\susqoo
\sim 
\frac{\CC}{\gl-\glc}|\loge|.
\end{equation*}
Further, \eqref{ku} yields 
\begin{equation*}
  \gl-\glc \sim \glc\qww \frac{r'(\eps)}{\eps} \sim \frac{\CC}{|\loge|^{\ga-1}}
\end{equation*}
and thus $|\loge|\sim \CC(\gl-\glc)^{-1/(\ga-1)}$.
Consequently, as $\gl\downto\glc$,
\begin{equation*}
\susqoo
\sim 
{\CC}{(\gl-\glc)^{-\ga/(\ga-1)}},
\end{equation*}
with a critical exponent $\ga/(\ga-1)>1$, in contrast to the
subcritical case \eqref{susc}.
\end{example}

\begin{example}
  \label{Easymm3}
Let $\P(D=k)\sim k^{-3}(\log k)\qw (\log\log k)\qww$  as $k\to\infty$.
Then, %as \epsto, 
$r'''(\eps)\sim\eps\qw\aloge\qw(\log\aloge)\qww$, and thus
$r''(\eps)\sim1/(\log\aloge)$ and $r'(\eps)\sim\eps/(\log\aloge)$, while
\begin{equation*}
 \eps r''(\eps)-r'(\eps) = \int_0^\eps t r'''(t)\ddd t
  \sim\frac{\eps}{\aloge\cdot(\log\aloge)^2}.
\end{equation*}
Hence, \eqref{sov} yields, as $\gl\downto\glc$,
\begin{equation}\label{qk}
  \susqoo\sim \frac{\CC}{\gl-\glc} \aloge\cdot\log\aloge.
\end{equation}

Furthermore, \eqref{ku} yields
\begin{equation*}
  \gl-\glc \sim \glc^2 \frac{r'(\eps)}{\eps}
\sim \frac{\glc^2}{\log\aloge}.
\end{equation*}
and thus $\aloge = \exp\bigpar{(\CC+o(1))/(\gl-\glc)}$.
Consequently \eqref{qk} yields
\begin{equation*}
  \susqoo= e^{(\CCx+o(1))/(\gl-\glc)},
\end{equation*}
with a more rapid growth than any power of $(\gl-\glc)\qw$. (The
critical exponent is $\infty$.)
\end{example}

It seems that in this way we can find examples where $\susqoo$ grows
arbitrarily fast as $\gl\downto\glc$.

\section{Some counterexamples}\label{Sex}

We give some examples where \refCNB\ are not satisfied, in order to show
that the results in general do not hold without these conditions.

\begin{example}\label{Estar}
  Let $d_i=1$ for $i\ge2$, and $d_1\sim a\sqrt n$ for some $a>0$. Thus
  $\gndd$ has $n-1$ vertices of degree 1 and a single vertex of higher
  degree $d_1$. Consequently, the components are a single star with
  $d_1+1$ vertices and $(n_1-1-d_1)/2$ isolated edges. Hence
  (deterministically), 
  \begin{equation}
	\sus\bigpar{\gndd}
=\frac1n\bigpar{(d_1+1)^2+2(n-1-d_1)} \to a^2+2.
\label{estar}
  \end{equation}

\refCNN\ holds with $p_1=1$ and $p_k=0$, $k\neq1$ (\ie, $\doo=1$ \as),
so $\muoo=1$ and $\nuoo=0$. Further, 
$\mun=1+O(\sqrt n)\to\muoo$  
but $\nun=\frac1n d_1(d_1-1)\to a^2\neq\nuoo$ so \refCN{C2} fails.
We have
$1+\muoo^2/(\muoo-\nuoo)=2$, and thus the conclusion of \refT{Tsus}(i)
does not hold. This shows that \refT{Tsus} can fail if \refCN{C2} does
not hold.

If further $a<1$, then
\begin{equation*}
  1+\frac{\mun^2}{\mun-\nun}
=1+\frac1{1-a^2}+o(1)
=\frac{2-a^2}{1-a^2}+o(1).
\end{equation*}
Consequently, \refT{Tsus3} too fails for this example, which shows
that the theorem does not hold in general witout the assumption of
uniform square integrability. Similarly, in the case $a>1$,
$\nun>\mun$ (at least for large $n$), and \refT{Tsus3} would predict
that $\sus(\gndd)\pto\infty$, while \eqref{estar} shows that in fact
it has a finite limit.
\end{example}

\begin{example}\label{E2star}
Modify \refE{Estar} by taking two vertices with high degree, for example
$d_1=d_2\sim a\sqrt n$ 
and $d_i=1$ for $i\ge3$, for some $a>0$. Thus the components of
$\gndd$ are either (when there is no edge 12) two stars of order
$d_1+1\sim a\sqrt n$
plus $n/2+o(n)$ isolated edges,
or (when there is an edge 12) one component of order $2d_1\sim2a\sqrt n$ plus
$n/2+o(n)$ isolated edges. Both events occur with positive limiting
probabilities. (In fact, a simple calculation of the number of
labelled graphs of the two types shows that the probability of an edge
12 converges to $a^2/(a^2+2)$.) Consequently, 
$\sus(\gndd)$ is either $4a^2+2+o(1)$ or $2a^2+2+o(1)$, and
$\sus(\gndd)$ converges (in distribution) to a two-point distribution
and not to a constant. Similarly, 
$\susq(\gndd)$ is either $2+o(1)$ or $a^2+2+o(1)$, and again there is
a limiting two-point distribution.
Consequently, the conclusions of \refT{Tsus}(i) fail for both $\sus$
and $\susq$.

As in \refE{Estar}, \refCNN\ holds with $p_1=1$ 
but \refCN{C2} fails.
\end{example}

In the remaining examples $p_1=0$, and thus \refCC{C1p1} does not hold.

\begin{example}\label{E2}
  The random 2-regular graph $G(n,2)$ is critical, with 
$p_2=1$ and $p_k=0$, $k\neq2$
(\ie, $\doo=2$ \as).
It is well-known that
$G(n,2)$ has \whp\ several large
components with sizes $\Theta_p(n)$, and it follows that 
$\sus(G(n,2)),\susq(G(n,2))\pto\infty$, in accordance with \refT{Tsus}(ii).

Now perturb this example by adding a suitable number of
vertices of degree 4, say  $n_4=\floor{n^{0.9}}$ and $n_2=n-n_4$. 
By ignoring all vertices of degree 2 in \gnddx (contracting their
adjacent edges to a single edge), we obtain a
random 4-regular multigraph, which \whp\ is connected, \cf{} \refR{Rp1=0}).
Hence, \whp\ all vertices of degree 4 belong to 
a single component. Moreover, by splitting each vertex of degree 4
into two vertices of degree 2, considering the (2-regular)
configuration model for 
these vertices, and then recombining the vertices of degree 4,
it is easily seen that \whp\ this is the giant component $\cc1$ and
that it contains 
all  vertices except a small number of cycles 
with a total size $\le n^{0.2}$, say. Thus \whp\ $\susq(G(n,2))\le
n^{0.4-1}$ so $\susq(G(n,2))\pto0$, 
although this example is critical, and so \refT{Tsus}(ii) fails for it.
\end{example}

\begin{example}
  \label{E0}
If $p_0=1$ (\ie, $\doo=0$ a.s.), then \refCC{C1p1} and \ref{C1l} do not hold.
We have $\muoo=0$ and $\qbp=1$ a.s., so $\susoo=1$.
However, $\sus(\gndd)\pto1$ may fail.

For example, let $d_i=3$ for $i\le n_3$ and $d_i=0$ for $n_3<i\le n$,
where we choose $n_3\=2\floor{a\sqrt n}$ for some $a>0$.
Then \refCNB\ hold except for \refCC{C1l},\ref{C1p1}, with $p_0=1$.
\gndd{} consists of $n_0=n-n_3$ isolated vertices together with a
random cubic graph on $n_3$ vertices. The latter is \whp{} connected,
see \refR{Rp1=0}, and thus \whp{}
\begin{equation*}
  \sus\bigpar{\gndd}=\frac1n(n-n_3+n_3^2)
\to1+4a^2>1.
\end{equation*}
Hence, \refT{Tsus} fails for this example.
\end{example}

\newcommand\AAP{\emph{Adv. Appl. Probab.} }
\newcommand\JAP{\emph{J. Appl. Probab.} }
\newcommand\JAMS{\emph{J. \AMS} }
\newcommand\MAMS{\emph{Memoirs \AMS} }
\newcommand\PAMS{\emph{Proc. \AMS} }
\newcommand\TAMS{\emph{Trans. \AMS} }
\newcommand\AnnMS{\emph{Ann. Math. Statist.} }
\newcommand\AnnPr{\emph{Ann. Probab.} }
\newcommand\CPC{\emph{Combin. Probab. Comput.} }
\newcommand\JMAA{\emph{J. Math. Anal. Appl.} }
\newcommand\RSA{\emph{Random Struct. Alg.} }
\newcommand\ZW{\emph{Z. Wahrsch. Verw. Gebiete} }
\newcommand\DMTCS{\jour{Discr. Math. Theor. Comput. Sci.} }

\newcommand\AMS{Amer. Math. Soc.}
\newcommand\Springer{Springer-Verlag}
\newcommand\Wiley{Wiley}

\newcommand\vol{\textbf}
\newcommand\jour{\emph}
\newcommand\book{\emph}
\newcommand\inbook{\emph}
\def\no#1#2,{\unskip#2, no. #1,} %(typeset after year) 
\newcommand\toappear{\unskip, to appear}

\newcommand\webcite[1]{%\hfil  %???
   %\penalty0 %???
\texttt{\def~{{\tiny$\sim$}}#1}\hfill\hfill}
\newcommand\webcitesvante{\webcite{http://www.math.uu.se/~svante/papers/}}
\newcommand\arxiv[1]{\webcite{arXiv:#1.}}

\def\nobibitem#1\par{}

\end{document}